\newcounter{minutes}
\newcounter{hours}
\newtheorem{lemma}{Lemma}
\newtheorem{theorem}{Theorem}
\newtheorem{corollary}{Corollary}
\newcommand{\real}{\operatorname{Re}}
\keywords{Analytic function, Convex and starlike functions of order $\alpha$, Hardy space, $q$-Bessel functions}
\subjclass[2010]{30C45,33C10}
\title[{Hardy class of $q$-Bessel functions}]{On some geometric properties and Hardy class of $q$-Bessel functions}
\author[\.{I}. Akta\c{s}]{\.{I}brah\.{I}m Akta\c{s}}
\address{Department of Mathematics, Kam\.{I}l \"{O}zda\u{g} Science Faculty, Karamano\u{g}lu Mehmetbey University, Yunus Emre Campus, 70100, Karaman--Turkey}
\email{aktasibrahim38@gmail.com; ibrahimaktas@kmu.edu.tr}
\begin{document}

\def\thefootnote{}
\footnotetext{ \texttt{File:~\jobname .tex,
          printed: \number\year-\number\month-\number\day,
          \thehours.\ifnum\theminutes<10{0}\fi\theminutes}
} \makeatletter\def\thefootnote{\@arabic\c@footnote}\makeatother

\maketitle

\begin{abstract}
In this paper, we deal with some geometric properties including starlikeness and convexity of order $\alpha$ of Jackson's second and third $q$-Bessel functions which are natural extensions of classical Bessel function $J_{\nu}$. In additon, we determine some conditions on the parameters such that Jackson's second and third $q$-Bessel functions belong to the Hardy space and to the class of bounded analytic functions. 
\end{abstract}

\section{Introduction and Preliminaries}
Special functions appears in many branches of mathematics and applied sciences. One of the most important special functions is Bessel function of the first kind $J_{\nu}$. The Bessel function of the first kind $J_{\nu}$ is a particular solution of the following homogeneous differential equation:
\begin{equation*}
z^{2}w^{\prime\prime}(z)+zw^{\prime}(z)+\left(z^{2}-\nu^{2}\right)w(z)=0,
\end{equation*}
which is known as Bessel differential equation. Also, the function $J_{\nu}$ has the following power series representation:
\begin{equation*}
J_{\nu}(z)=\sum_{n\geq0}\frac{(-1)^{n}\left(\frac{z}{2}\right)^{2n+\nu}}{n!\Gamma(n+\nu+1)},
\end{equation*}
where $\Gamma(z)$ denotes Euler's gamma function. Comprehensive information about the Bessel function can be found in Watson's treatise \cite{Watson}. On the other hand, there are some $q$-analogues of the Bessel functions in the literature. At the beginning of the 19. century, with the help of the $q$-calculus, famous English mathematician Frank Hilton Jackson has defined some functions which are known as Jackson's $q$-Bessel functions. The Jackson's second and third $q$-Bessel functions are defined by (see \cite{ANNABY1,ismailBook,Jackson1905})
\begin{equation}\label{second}
J_{\nu}^{(2)}(z;q)=\frac{(q^{\nu+1};q)_{\infty}}{(q;q)_{\infty}}\sum_{n\geq0}\frac{(-1)^{n}\left(\frac{z}{2}\right)^{2n+\nu}}{(q;q)_{n}(q^{\nu+1};q)_{n}}q^{n(n+\nu)}
\end{equation}
and
\begin{equation}\label{third}
J_{\nu}^{(3)}(z;q)=\frac{(q^{\nu+1};q)_{\infty}}{(q;q)_{\infty}}\sum_{n\geq0}\frac{(-1)^{n}z^{2n+\nu}}{(q;q)_{n}(q^{\nu+1};q)_{n}}q^{\frac{1}{2}{n(n+1)}},
\end{equation} where $z\in\mathbb{C},\nu>-1,q\in(0,1)$ and $$(a;q)_0=1,	(a;q)_n=\prod_{k=1}^{n}\left(1-aq^{k-1}\right),	 (a,q)_{\infty}=\prod_{k\geq1}\left(1-aq^{k-1}\right).$$
Here we would like to say that Jackson's third $q$-Bessel function is also known as Hahn-Exton $q$-Bessel function due to their contributions to the theory of $q-$Bessel functions (see \cite{koelink,Jackson1905-2}). In addition, it is known from \cite[p. 21]{ANNABY1} that these $q$-analogues satisfy the following limit relations:
$$\lim_{q\to1^{-}}J_{\nu}^{(2)}((1-q)z;q)=J_{\nu}(z)\text{ and }\lim_{q\to1^{-}}J_{\nu}^{(3)}((1-q)z;q)=J_{\nu}(2z).$$
Also, it is important to mention here that the notations \eqref{second} and \eqref{third} are from Ismail (see \cite{ismail1,ismailBook}) and they are different from the Jackson's original notations. Results on the properties of Jackson's second and third $q$-Bessel functions may be found in \cite{ANNABY1,ANNABY2,Koornwinder,abreu,ismail1,ismail2,koelink,aktas2017,aktas2018,aktas2019,BDM} and the references therein, comprehensively. 

This paper is organized as follow: the rest of this section is devoted to some basic concepts and results needed for the proof of
our main results. In Section 2, we deal with some geometric properties including  starlikeness and convexity of order $\alpha$ of normalized $q$-Bessel functions. Also, we present some results regarding Hardy space of normalized $q$-Bessel functions.

Let $\mathbb{U}=\{z\in\mathbb{C}:\left|z\right|<1\}$ be the open unit disk and $\mathcal{H}$ be the set of all analytic functions on the open unit disk $\mathbb{U}$. We denote by $\mathcal{A}$ the class of analytic functions $f:\mathbb{U}\rightarrow\mathbb{C},$ normalized by
\begin{equation}\label{f(z)}
f(z)=z+\sum_{n\geq2}a_{n}z^n.
\end{equation}
 By $\mathcal{S}$ we mean the class of functions belonging to $\mathcal{A}$ which are univalent in $\mathbb{U}$. Also, for $0\leq\alpha<1$, $\mathcal{S}^{\star}(\alpha)$ and $\mathcal{C}(\alpha)$ denote the subclasses of $\mathcal{S}$ consisting of all functions in $ \mathcal{A} $ which are starlike of order $\alpha$ and convex of order $\alpha$ in the open unit disk $\mathbb{U}$, respectively. When $\alpha=0$, we denote the classes $\mathcal{S}^{\star}(\alpha)$ and $\mathcal{C}(\alpha)$ by $\mathcal{S}^{\star}$ and $\mathcal{C}$, respectively. The analytic characterizations of these subclasses can be given as follows:
\begin{equation}\label{Starlike}
\mathcal{S}^{\star}(\alpha)=\left\lbrace f: f\in\mathcal{A}\text{ and }\Re\left(\frac{zf^{\prime}(z)}{f(z)}\right)>\alpha\text{ for }z\in\mathbb{U}\right\rbrace
\end{equation} 
and
\begin{equation}\label{Convex}
\mathcal{C}(\alpha)=\left\lbrace f: f\in\mathcal{A}\text{ and }\Re\left(1+\frac{zf^{\prime\prime}(z)}{f^{\prime}(z)}\right)>\alpha\text{ for }z\in\mathbb{U}\right\rbrace,
\end{equation}
respectively. In \cite{B06}, for $\alpha<1$, the author introduced the classes:
\begin{equation}\label{P(alpha)}
\mathcal{P}(\alpha)=\{p\in\mathcal{H}:\exists\eta\in\mathbb{R}\text{ such that } p(0)=1, \real\left[e^{i\eta}(p(z)-\alpha)\right]>0, z\in\mathbb{U}\}
\end{equation}
and
\begin{equation}\label{R(alpha)}
\mathcal{R}(\alpha)=\{f\in\mathcal{A}:\exists\eta\in\mathbb{R}\text{ such that } \real\left[e^{i\eta}(p(z)-\alpha)\right]>0, z\in\mathbb{U}\}.
\end{equation}
When $\eta=0$, the classes $\mathcal{P}(\alpha)$ and $\mathcal{R}(\alpha)$ will be denoted by $\mathcal{P}_{0}(\alpha)$ and $\mathcal{R}_{0}(\alpha)$, respectively. Also, for $\alpha=0$ we denote $\mathcal{P}_{0}(\alpha)$ and $\mathcal{R}_{0}(\alpha)$ simply by $\mathcal{P}$ and $\mathcal{R}$, respectively. In addition, the Hadamard product (or convolution) of two power series
$$f_{1}(z)=z+\sum_{n\geq2}a_{n}z^{n}\text{ and }f_{2}(z)=z+\sum_{n\geq2}b_{n}z^{n},$$
is defined by
$$(f_{1}*f_{2})(z)=z+\sum_{n\geq2}a_{n}b_{n}z^{n}.$$
Let $\mathcal{H}^{p}$ $(0<p\leq\infty)$  denote the Hardy space of all analytic functions $f(z)$ in $\mathbb{U}$, and define the integral means $M_{p}(r,f)$ by 
\begin{equation}\label{Hardy1}
M_{p}(r,f)=\begin{cases}
\left(\frac{1}{2\pi}\int_{0}^{2\pi}\left|f(re^{i\theta})\right|^{p}d\theta\right)^{\frac{1}{p}},\hspace{0.5cm}\text{ if } 0<p<\infty\\
\sup_{0\leq\theta<2\pi}\left|f(re^{i\theta})\right|, \hspace{1.3cm}\text{ if } p=\infty.
\end{cases}
\end{equation}
An analytic function $f(z)$ in $\mathbb{U}$, is said to belong to the Hardy space $\mathcal{H}^{p}$ where $0<p\leq\infty,$ if the set $\{ M_{p}(r,f): r\in\left[0,1 \right) \}$ is bounded. It is important to remind here that  $\mathcal{H}^{p}$ is a Banach space with the norm defined by (see \cite[p. 23]{Duren}) $$\left|\left|f\right|\right|_{p}=\lim_{r\to1^{-}}M_{p}(r,f)$$ for $1\leq{p}\leq\infty$. On the other hand, we know that $\mathcal{H}^{\infty}$ is the class of bounded analytic functions in $\mathbb{U},$ while $\mathcal{H}^{2}$ is the class of power series $\sum{a_{n}z^{n}}$ such that $\sum\left|a_{n}\right|^{2}<\infty.$ In addition, it is known from \cite{Duren} that $\mathcal{H}^{q}$ is a subset of $\mathcal{H}^{p}$ for $0<p\leq q\leq\infty$. Also, two well-knonw results about the Hardy space $\mathcal{H}^{p}$ are the following (see \cite{Duren}):
\begin{equation}\label{Hardy2}
\real{f^{\prime}(z)}>0\Rightarrow\begin{cases}
f^{\prime}\in\mathcal{H}^{q},\hspace{1.3cm} \forall{q<1}\\f\in\mathcal{H}^{\frac{q}{1-q}},\hspace{1cm}\forall{q\in(0,1)}.
\end{cases}
\end{equation}

In the recent years, the authors in \cite{ONSS02,Silverman75} and \cite{Singh82} proved several interesting results involving univalence, starlikeness, convexity and close-to-convexity of functions $f\in\mathcal{A}$. Later on, many authors investigated geometric properties of some special functions such as Bessel, Struve, Lommel, Mittag-Leffler and Wright by using the above mentioned results. Furthermore, Eenigenburg and Keogh determined some conditions on the convex, starlike and close-to-convex functions to belong to the Hardy space $\mathcal{H}^{p}$ in \cite{EK70}. On the other hand, the authors in \cite{B06,JKS93,KS94,Ponnusamy,Yagmur15,YO14,Prajabat} studied the Hardy space of some special functions (like Hypergeometric, Bessel, Struve, Lommel and Mittag-Leffler) and analytic function families.

Motivated by the above studies, our main aim is to determine some conditions on the parameters such that Jackson's second and third $q-$Bessel functions are starlike of order $\alpha$ and convex of order $\alpha$, respectively. Also, we find some conditions for the hadamard products $h_{\nu}^{(2)}(z;q)*f(z)$ and $h_{\nu}^{(3)}(z;q)*f(z)$ to belong to $\mathcal{H}^{\infty}\cap\mathcal{R}$, where $h_{\nu}^{(k)}(z;q)$ are Jackson's normalized $q$-Bessel functions which are given by \eqref{second2} and \eqref{third2} for $k\in\{2,3\}$, and $f$ is an analytic function in $\mathcal{R}$.  Morever, we investigate the Hardy space of the mentioned $q-$Bessel functions.

The next results will be used in order to prove several theorems.
\begin{lemma}[Silverman, \cite{Silverman75}]\label{Lemma1}
	Let f is of the form \eqref{f(z)}. If
	\begin{equation}\label{Starlikeness ineq.}
	\sum_{n=2}^{\infty}\left(n-\alpha\right)\left|a_n\right|\leq1-\alpha,
	\end{equation}
	then the function $f(z)$ is in the class ${\mathcal{S}^{\star}(\alpha)}$.
\end{lemma}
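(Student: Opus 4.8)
The plan is to use the classical coefficient technique for starlikeness of order $\alpha$. By the analytic characterization \eqref{Starlike}, it is enough to show that $\Re\!\left(zf'(z)/f(z)\right)>\alpha$ for every $z\in\mathbb{U}$. I would first reduce this to the pointwise bound
\begin{equation*}
\left|\frac{zf'(z)}{f(z)}-1\right|<1-\alpha\qquad(z\in\mathbb{U}),
\end{equation*}
which suffices since $|w-1|<1-\alpha$ forces $\Re w\ge 1-|w-1|>\alpha$. (Equivalently one could pass through the Möbius map $w\mapsto(w-1)/(w+1-2\alpha)$ and show its modulus is $<1$; both routes lead to the same estimate.)

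Next, from $f(z)=z+\sum_{n\ge 2}a_nz^n$ I would write
\begin{equation*}
\frac{zf'(z)}{f(z)}-1=\frac{zf'(z)-f(z)}{f(z)}=\frac{\sum_{n\ge 2}(n-1)a_nz^n}{z+\sum_{n\ge 2}a_nz^n},
\end{equation*}
and bound the modulus by applying the triangle inequality in the numerator, the reverse triangle inequality in the denominator (the common factor $|z|$ cancels), and then $|z|^{n-1}<1$ for $z\in\mathbb{U}$, which yields
\begin{equation*}
\left|\frac{zf'(z)}{f(z)}-1\right|\le\frac{\sum_{n\ge 2}(n-1)|a_n|\,|z|^{n-1}}{1-\sum_{n\ge 2}|a_n|\,|z|^{n-1}}<\frac{\sum_{n\ge 2}(n-1)|a_n|}{1-\sum_{n\ge 2}|a_n|}.
\end{equation*}
Here the hypothesis \eqref{Starlikeness ineq.} guarantees $\sum_{n\ge 2}|a_n|<1$: since $n-\alpha>1-\alpha$ for $n\ge 2$, we have $(1-\alpha)\sum_{n\ge 2}|a_n|<\sum_{n\ge 2}(n-\alpha)|a_n|\le 1-\alpha$, which keeps the denominators strictly positive (and en route shows $f$ does not vanish on $\mathbb{U}\setminus\{0\}$, while $zf'(z)/f(z)\to1$ at the origin).

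Finally I would note that the rightmost fraction is $\le 1-\alpha$ precisely when $\sum_{n\ge 2}(n-1)|a_n|\le(1-\alpha)\bigl(1-\sum_{n\ge 2}|a_n|\bigr)$, that is, when $\sum_{n\ge 2}(n-\alpha)|a_n|\le 1-\alpha$, which is exactly \eqref{Starlikeness ineq.} (the degenerate case $f(z)=z$ being trivial). This completes the argument. There is no genuine obstacle; the only step that deserves a line of care is confirming that $1-\sum_{n\ge 2}|a_n|\,|z|^{n-1}$ remains strictly positive, so that the chain of estimates above is legitimate.
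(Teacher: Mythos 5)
The paper does not prove this lemma---it is quoted verbatim from Silverman \cite{Silverman75}---so there is no in-paper argument to compare against. Your proof is correct and is essentially the classical argument from Silverman's paper: reduce $\real\left(zf'(z)/f(z)\right)>\alpha$ to the disk condition $\left|zf'(z)/f(z)-1\right|<1-\alpha$, estimate numerator and denominator by the triangle and reverse triangle inequalities, and observe that the resulting coefficient inequality is exactly \eqref{Starlikeness ineq.}; you also correctly handle the two points that need care, namely the strict positivity of $1-\sum_{n\geq 2}\left|a_n\right|\left|z\right|^{n-1}$ (hence the nonvanishing of $f$ on $\mathbb{U}\setminus\{0\}$) and the degenerate case $f(z)=z$.
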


\begin{lemma}[Silverman, \cite{Silverman75}]\label{Lemma2}
	Let f is of the form \eqref{f(z)}. If
	\begin{equation}\label{convexity ineq.}
	\sum_{n=2}^{\infty}n\left(n-\alpha\right)\left|a_n\right|\leq1-\alpha,
	\end{equation}
	then the function $f(z)$ is in the class ${\mathcal{C}(\alpha)}.$
\end{lemma}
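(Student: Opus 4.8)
\textbf{Proof sketch of Lemma~\ref{Lemma2}.} The plan is to verify the analytic characterization \eqref{Convex} directly. First I would dispose of the degenerate case: if $a_n=0$ for all $n\ge2$ then $f(z)=z$ and $1+zf''(z)/f'(z)\equiv1>\alpha$, so I may assume from now on that at least one coefficient is nonzero. Next I would note that the hypothesis already forces $f'(z)\ne0$ on $\mathbb{U}$: since $n\le n(n-\alpha)/(1-\alpha)$ for every integer $n\ge2$ and $0\le\alpha<1$, the hypothesis gives $\sum_{n\ge2}n|a_n|\le\frac{1}{1-\alpha}\sum_{n\ge2}n(n-\alpha)|a_n|\le1$, hence $|f'(z)-1|\le|z|\sum_{n\ge2}n|a_n|<1$ for $z\in\mathbb{U}$. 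Thus $w(z):=1+zf''(z)/f'(z)$ is a well-defined analytic function on $\mathbb{U}$ with $w(0)=1$.

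The key step is the standard Möbius reformulation: for analytic $w$ with $w(0)=1$ one has $\Re w(z)>\alpha$ on $\mathbb{U}$ if and only if $\left|\frac{w(z)-1}{w(z)+1-2\alpha}\right|<1$ on $\mathbb{U}$, since $w\mapsto\frac{w-1}{w+1-2\alpha}$ maps the half-plane $\{\Re w>\alpha\}$ onto the unit disk. Applying this with $w=1+zf''/f'$ and clearing the (nonvanishing) denominator $f'(z)$, the claim $f\in\mathcal{C}(\alpha)$ reduces to showing
\[
\left|\,zf''(z)\,\right|<\left|\,2(1-\alpha)f'(z)+zf''(z)\,\right|,\qquad z\in\mathbb{U}.
\]

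I would then expand both sides in powers of $z$. From \eqref{f(z)} we have $zf''(z)=\sum_{n\ge2}n(n-1)a_nz^{n-1}$ and $2(1-\alpha)f'(z)+zf''(z)=2(1-\alpha)+\sum_{n\ge2}n(n+1-2\alpha)a_nz^{n-1}$. Bounding the left side above and the right side below by the triangle inequality, and using $|z|^{n-1}<1$ for $n\ge2$, $z\in\mathbb{U}$ (this is where strictness enters, given a nonzero coefficient), it suffices to check
\[
\sum_{n\ge2}n(n-1)|a_n|\;\le\;2(1-\alpha)-\sum_{n\ge2}n(n+1-2\alpha)|a_n|,
\]
that is, $\sum_{n\ge2}n\big[(n-1)+(n+1-2\alpha)\big]|a_n|=2\sum_{n\ge2}n(n-\alpha)|a_n|\le2(1-\alpha)$, which is precisely the hypothesis \eqref{convexity ineq.}. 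The same comparison (via $n(n+1-2\alpha)\le2n(n-\alpha)$) shows the lower bound for the right side is nonnegative, so no sign ambiguity arises.

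I do not expect a genuine obstacle; the only points needing a little care are (i) ensuring $f'\ne0$ so that $w$ is defined, and (ii) preserving the strict inequalities, both handled above. A slicker alternative, available because Lemma~\ref{Lemma1} precedes this one, is to use Alexander's relation $f\in\mathcal{C}(\alpha)\iff zf'(z)\in\mathcal{S}^{\star}(\alpha)$: writing $g(z)=zf'(z)=z+\sum_{n\ge2}na_nz^{n}$, the coefficient sum $\sum_{n\ge2}(n-\alpha)\,n|a_n|$ appearing in Lemma~\ref{Lemma1} for $g$ coincides with the left-hand side of \eqref{convexity ineq.}, so $g\in\mathcal{S}^{\star}(\alpha)$ and hence $f\in\mathcal{C}(\alpha)$ in one line.
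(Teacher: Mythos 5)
The paper does not prove this statement; it is quoted verbatim from Silverman \cite{Silverman75}, so there is no internal proof to compare against. Your argument is correct and complete: the M\"obius reformulation of $\Re w>\alpha$ as $\bigl|\tfrac{w-1}{w+1-2\alpha}\bigr|<1$, the clearing of the nonvanishing $f'$, and the coefficient comparison $n(n-1)+n(n+1-2\alpha)=2n(n-\alpha)$ all check out, and you correctly handle the two delicate points (nonvanishing of $f'$ and strictness of the inequalities). Your closing remark is also the historically accurate one: Silverman's own derivation of the convexity criterion is precisely the one-line reduction via Alexander's relation $f\in\mathcal{C}(\alpha)\iff zf'\in\mathcal{S}^{\star}(\alpha)$ applied to the starlikeness criterion of Lemma~\ref{Lemma1}, so either of your two routes is acceptable, with the direct one being self-contained and the second one being the standard textbook proof.
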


\begin{lemma}[Eenigenburg and Keogh, \cite{EK70}]\label{Lemma3}
	Let $\alpha\in\left[0,1\right) $. If the function $ f\in\mathcal{C}(\alpha) $ is not of the form
	\begin{equation}\label{forms}
	\begin{cases} f(z)=k+lz\left(1-ze^{i\theta}\right)^{2\alpha-1},\hspace{0.6cm}\alpha\neq\frac{1}{2} \\ f(z)=k+l\log\left(1-ze^{i\theta}\right),\hspace{1cm}\alpha=\frac{1}{2}\end{cases}
	\end{equation} 
	for some $k,l\in\mathbb{C}$ and $\theta\in\mathbb{R},$ then the following statements hold:
	\begin{enumerate}
		\item[\textbf{\textit{a.}}] There exist $\delta=\delta(f)>0$ such that $f^{\prime}\in\mathcal{H}^{\delta+\frac{1}{2(1-\alpha)}}.$
		\item[\textbf{\textit{b.}}] If $\alpha\in\left[ 0,\frac{1}{2}\right),$ then there exist $\tau=\tau(f)>0$ such that $f\in\mathcal{H}^{\tau+\frac{1}{1-2\alpha}}.$
		\item[\textbf{\textit{c.}}] If $\alpha\geq\frac{1}{2},$ then $f\in\mathcal{H}^{\infty}.$
	\end{enumerate}	
\end{lemma}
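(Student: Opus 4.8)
The plan is to derive Lemma~\ref{Lemma3} from the Herglotz representation of analytic functions with positive real part, together with the classical theorems on integral means and on integration in Hardy spaces. Since $f\in\mathcal{C}(\alpha)$, the function $p$ defined by $1+zf''(z)/f'(z)=\alpha+(1-\alpha)p(z)$ satisfies $p(0)=1$ and $\real p(z)>0$ on $\mathbb{U}$, so there is a probability measure $\mu$ on $\partial\mathbb{U}$ with $p(z)=\int_{\partial\mathbb{U}}\frac{1+xz}{1-xz}\,d\mu(x)$. Writing the defining relation as $f''(z)/f'(z)=(1-\alpha)\bigl(p(z)-1\bigr)/z$ and integrating from $0$ to $z$ yields
\[
f'(z)=f'(0)\exp\!\Bigl(-2(1-\alpha)\int_{\partial\mathbb{U}}\log(1-xz)\,d\mu(x)\Bigr),\qquad z\in\mathbb{U},
\]
so that $\log|f'(re^{i\theta})|=\log|f'(0)|-2(1-\alpha)\int_{\partial\mathbb{U}}\log|1-xre^{i\theta}|\,d\mu(x)$; thus the Hardy class of $f'$, and then of $f$, is entirely controlled by the mass distribution of $\mu$.

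The role of the excluded forms in \eqref{forms} is now transparent: if $\mu$ is a unit point mass $\delta_{x_0}$ then $f'(z)=f'(0)(1-x_0z)^{-2(1-\alpha)}$, and integration recovers precisely the exceptional functions \eqref{forms} (with $e^{i\theta}=x_0$), the logarithmic case occurring exactly when $2(1-\alpha)=1$; conversely, among members of $\mathcal{C}(\alpha)$, only those functions have a point-mass Herglotz measure. Hence, under the hypothesis of the lemma, $\mu$ is not a point mass, so $c:=\max_{x\in\partial\mathbb{U}}\mu(\{x\})<1$. Writing $\lambda=2(1-\alpha)$, the heart of the matter is the integral-means estimate: \emph{if $\mu$ is not a point mass then $f'\in\mathcal{H}^{p}$ for every $p<\tfrac1{\lambda c}$}. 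Granting this, and using $c<1$, any $\delta\in\bigl(0,\tfrac1{\lambda c}-\tfrac1{2(1-\alpha)}\bigr)$ gives $f'\in\mathcal{H}^{\delta+\frac1{2(1-\alpha)}}$, which is part~(a).

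To prove the estimate I would first take a finitely supported $\mu=\sum_{k=1}^{m}c_k\delta_{x_k}$ with distinct $x_k$; then $f'(z)=f'(0)\prod_{k}(1-x_kz)^{-\lambda c_k}$, whose modulus on $|z|=r$ has, near each $x_k$, a single algebraic singularity of exponent $\lambda c_k$ with all other factors bounded there, so the elementary fact that $\frac1{2\pi}\int_0^{2\pi}|1-re^{i\phi}|^{-\gamma}\,d\phi$ stays bounded as $r\to1^{-}$ precisely when $\gamma<1$ yields $M_p(r,f')=O(1)$ as soon as $\lambda c_k p<1$ for every $k$, i.e.\ $p<\tfrac1{\lambda c}$. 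A general $\mu$ is then reduced to this case by isolating its finitely many atoms of mass $\ge\varepsilon$ in disjoint short arcs, collecting the remaining (small) mass into finitely many arcs, and estimating each arc by Jensen's inequality together with the localisation of the singularities, the continuous part and the small atoms only weakening the growth. The hard part will be exactly this last reduction: the bound $p<\tfrac1{2(1-\alpha)}$ drops out of one application of Jensen's inequality, but genuinely passing beyond it --- equivalently, quantifying that, when $\mu$ is not concentrated at a single point, the boundary growth of $f'$ is strictly milder than that of one factor $(1-xz)^{-2(1-\alpha)}$ --- is precisely where the forms \eqref{forms} must be removed; handling an arbitrary $\mu$ (with possibly a nontrivial continuous part, or infinitely many atoms) rather than a finite atomic one is the delicate technical point, and is the content of the argument in \cite{EK70}.

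Finally I would pass from $f'$ to $f$. By part~(a), $f'\in\mathcal{H}^{s}$ with $s=\delta+\tfrac1{2(1-\alpha)}$ for some $\delta>0$. If $\alpha\in[0,\tfrac12)$ then $\tfrac1{2(1-\alpha)}<1$; shrinking $\delta$ so that $s<1$, the Hardy--Littlewood integration theorem --- a function whose derivative lies in $\mathcal{H}^{s}$, $0<s<1$, lies in $\mathcal{H}^{s/(1-s)}$ (compare \eqref{Hardy2}; see \cite{Duren}) --- gives $f\in\mathcal{H}^{s/(1-s)}$, and since $t\mapsto t/(1-t)$ is increasing and sends $\tfrac1{2(1-\alpha)}$ to $\tfrac1{1-2\alpha}$, we obtain $f\in\mathcal{H}^{\tau+\frac1{1-2\alpha}}$ for some $\tau=\tau(f)>0$, which is part~(b). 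If $\alpha\ge\tfrac12$ then $\tfrac1{2(1-\alpha)}\ge1$, so $s>1$ and in particular $f'\in\mathcal{H}^{1}$; since a function whose derivative lies in $\mathcal{H}^{1}$ extends continuously to $\overline{\mathbb{U}}$ (see \cite{Duren}), $f$ is bounded, i.e.\ $f\in\mathcal{H}^{\infty}$, which is part~(c).
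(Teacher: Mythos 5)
First, note that the paper itself offers no proof of Lemma~\ref{Lemma3}: it is quoted verbatim from Eenigenburg and Keogh \cite{EK70}, so there is no internal argument to compare yours against. Your reconstruction does follow the strategy of that source --- Herglotz representation of $1+zf''(z)/f'(z)$, the exponential formula for $f'$, identification of the excluded forms with a unit point mass of the representing measure $\mu$, and the Hardy--Littlewood integration theorem to pass from $f'$ to $f$ --- and your deductions of parts (b) and (c) from part (a) are correct.

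The gap is in part (a) itself. The entire content of the lemma is the quantitative claim you isolate in italics: if the maximal atom $c$ of $\mu$ satisfies $c<1$, then $f'\in\mathcal{H}^{p}$ for every $p<1/(\lambda c)$, $\lambda=2(1-\alpha)$. You verify this only when $\mu$ is a finite sum of point masses, and for general $\mu$ you state that the reduction ``is the content of the argument in \cite{EK70}'' --- that is, you defer precisely the step you are being asked to prove. The missing step is not routine: one must first show (by a compactness argument) that for every $\varepsilon>0$ there is $\eta>0$ with $\mu(I)<c+\varepsilon$ for every arc $I$ of length $\eta$, then, for $\phi$ near a given boundary point, split $\int\log\left|1-re^{i\phi}e^{-it}\right|^{-1}d\mu(t)$ into near and far parts, bound the far part, and apply Jensen's inequality to the \emph{normalised restriction} of $\mu$ to the near arc so that the exponent produced is $\lambda p\,\mu(I)<1$ rather than $\lambda p$; Fubini then bounds $M_{p}(r,f')$ uniformly in $r$. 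As you yourself observe, a single global application of Jensen only yields $p<1/\lambda=1/(2(1-\alpha))$, which is exactly the threshold and gives no $\delta>0$, so without the localised argument none of (a)--(c) follows. A minor further point: the antiderivative of $(1-x_{0}z)^{-2(1-\alpha)}$ is of the form $k+l(1-x_{0}z)^{2\alpha-1}$, not $k+lz\left(1-x_{0}z\right)^{2\alpha-1}$, so the point-mass case does not literally ``recover'' the first line of \eqref{forms} as printed; the two families have the same Hardy classes, but the identification should be stated with care.
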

\begin{lemma}[Stankiewich and Stankiewich, \cite{Stankiewich}]\label{Lemma4}
$\mathcal{P}_{0}(\alpha)*\mathcal{P}_{0}(\beta)\subset\mathcal{P}_{0}(\gamma)$, where $\gamma=1-2(1-\alpha)(1-\beta)$. The value of $\gamma$ is the best possible.
\end{lemma}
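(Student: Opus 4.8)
The plan is to reduce everything to the classical Carathéodory class $\mathcal{P}=\mathcal{P}_0(0)$ by an affine change, use the Herglotz representation, and finish with a one‑variable estimate on a Möbius transformation. Since $\alpha,\beta<1$, every $p\in\mathcal{P}_0(\alpha)$ can be written as $p(z)=\alpha+(1-\alpha)p_1(z)$ with $p_1:=(p-\alpha)/(1-\alpha)\in\mathcal{P}$ and $p_1(0)=1$, and similarly $q\in\mathcal{P}_0(\beta)$ satisfies $q(z)=\beta+(1-\beta)q_1(z)$ with $q_1\in\mathcal{P}$. Extending the Hadamard product in the obvious way to power series with constant term $1$ and comparing coefficients (the constant terms contribute $1$, while the $n$‑th coefficients of $p$ and $q$ are $(1-\alpha)$ and $(1-\beta)$ times those of $p_1$ and $q_1$), one obtains
\begin{equation*}
(p*q)(z)=1-(1-\alpha)(1-\beta)+(1-\alpha)(1-\beta)\,(p_1*q_1)(z).
\end{equation*}
Hence, since $(p*q)(0)=1$, it suffices to prove that $\real\,(p_1*q_1)(z)>-1$ on $\mathbb{U}$, for then $\real\,(p*q)(z)>1-2(1-\alpha)(1-\beta)=\gamma$, i.e. $p*q\in\mathcal{P}_0(\gamma)$.

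To estimate $p_1*q_1$, use the Herglotz representation $p_1(z)=\int_{|x|=1}\frac{1+xz}{1-xz}\,d\mu(x)$ and $q_1(z)=\int_{|y|=1}\frac{1+yz}{1-yz}\,d\nu(y)$ for probability measures $\mu,\nu$ on the unit circle. Reading off Taylor coefficients gives $(p_1)_n=2\int x^{n}\,d\mu(x)$ and $(q_1)_n=2\int y^{n}\,d\nu(y)$ for $n\ge1$, so by Fubini and resummation of the geometric series (legitimate since $|xyz|=|z|<1$)
\begin{equation*}
(p_1*q_1)(z)=1+4\iint\frac{xyz}{1-xyz}\,d\mu(x)\,d\nu(y)=\iint\frac{1+3xyz}{1-xyz}\,d\mu(x)\,d\nu(y).
\end{equation*}
The Möbius map $w\mapsto\frac{1+3w}{1-w}$ carries $\mathbb{U}$ onto the half‑plane $\{\real w>-1\}$: its pole is at $w=1$, it sends $w=-1$ to $-1$ and $w=0$ to $1$, so the image of $\partial\mathbb{U}$ is the line $\real w=-1$ and the image of $\mathbb{U}$ is the side containing $1$. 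Therefore $\real\frac{1+3xyz}{1-xyz}>-1$ for all $|z|<1$ and all $x,y$ on the circle, and integrating against the probability measure $\mu\times\nu$ yields $\real\,(p_1*q_1)(z)>-1$, which completes the inclusion.

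For sharpness, take $\mu=\nu=\delta_1$, that is $p(z)=\frac{1+(1-2\alpha)z}{1-z}\in\mathcal{P}_0(\alpha)$ and $q(z)=\frac{1+(1-2\beta)z}{1-z}\in\mathcal{P}_0(\beta)$; then $(p_1*q_1)(z)=\frac{1+3z}{1-z}$ and
\begin{equation*}
(p*q)(z)=1-(1-\alpha)(1-\beta)+(1-\alpha)(1-\beta)\,\frac{1+3z}{1-z},
\end{equation*}
whose real part tends to $\gamma$ as $z\to-1$ along the real axis, so no constant larger than $\gamma$ can replace it. The only delicate points in the argument are bookkeeping ones: extending the convolution notation to series with nonzero constant term, and justifying the interchange of summation and integration in the Herglotz computation, which is immediate from the uniform convergence of $\sum_{n\ge1}(xyz)^n$ on compact subsets of $\mathbb{U}$. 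The genuinely computational step — and the one that simultaneously identifies the extremal functions — is the identification of $\frac{1+3w}{1-w}$ as a conformal map of $\mathbb{U}$ onto $\{\real w>-1\}$.
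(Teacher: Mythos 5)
The paper does not prove this statement at all: it is imported verbatim as a known result of Stankiewicz and Stankiewicz, so there is no in-paper argument to compare against. Judged on its own, your proof is correct and complete. The affine reduction $p=\alpha+(1-\alpha)p_1$, $q=\beta+(1-\beta)q_1$ with $p_1,q_1\in\mathcal{P}$ and the resulting identity $(p*q)(z)=1-(1-\alpha)(1-\beta)+(1-\alpha)(1-\beta)(p_1*q_1)(z)$ are right, the Herglotz computation $(p_1*q_1)(z)=\iint\frac{1+3xyz}{1-xyz}\,d\mu(x)\,d\nu(y)$ is legitimate (Fubini plus the geometric series, as you note), and the identification of $w\mapsto\frac{1+3w}{1-w}$ as a conformal map of $\mathbb{U}$ onto $\{\real w>-1\}$ does the rest; the extremal choice $\mu=\nu=\delta_1$ correctly exhibits sharpness as $z\to-1$. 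One hair worth splitting: integrating the pointwise strict inequality $\real\frac{1+3xyz}{1-xyz}>-1$ against a probability measure only gives $\geq-1$ a priori; strictness survives because for fixed $|z|=r<1$ the integrand is continuous on the compact torus $\{|x|=|y|=1\}$, hence bounded below by some $c(r)>-1$ — worth one clause in a written-up version. This Herglotz-representation route is the standard (and essentially the original) proof of the Stankiewicz--Stankiewicz convolution lemma, so your argument supplies exactly the proof the paper delegates to the literature.
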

In addition to the above Lemmas, in proving our assertions we will use some inequalities and series sums. It is easy to see that the following inequalites
\begin{equation}\label{ine.1}
q^{(n-1)(n-1+\nu)}\leq{q^{(n-1)\nu}},
\end{equation} 
\begin{equation}\label{ine.2}
q^{\frac{1}{2}(n-1)n}\leq{q^{\frac{1}{2}(n-1)}},
\end{equation}
\begin{equation}\label{ine.3}
(q;q)_{n-1}=\prod_{k=1}^{n-1}(1-q^{k})>(1-q)^{n-1},
\end{equation}
and
\begin{equation}\label{ine.4}
(q^{\nu+1};q)_{n-1}=\prod_{k=1}^{n-1}(1-q^{\nu+k})>(1-q^{\nu})^{n-1}
\end{equation}
hold true for $n\geq2$, $q\in(0,1)$ and $\nu>-1$. Furthermore, it can be easily shown that the following series sums
\begin{equation}\label{sum1}
\sum_{n\geq2}r^{n-1}=\frac{r}{1-r}, 
\end{equation}
\begin{equation}\label{sum2}
\sum_{n\geq2}nr^{n-1}=\frac{r(2-r)}{(1-r)^{2}},
\end{equation}
\begin{equation}\label{sum3}
\sum_{n\geq2}n^{2}r^{n-1}=\frac{r(r^{2}-3r+4)}{(1-r)^{3}}
\end{equation}
and
\begin{equation}\label{sum4}
\sum_{n\geq2}\frac{r^{n}}{n}=\log{\frac{1}{1-r}}-r
\end{equation}
hold true for $\left|r\right|<1$.

\section{Main results}
\setcounter{equation}{0}
In this section we present our main results. Due to the functions defined by \eqref{second} and \eqref{third} do not belong to the class $\mathcal{A}$, we consider following normalized forms of the $q$-Bessel functions:
\begin{equation}\label{second2}
h_{\nu}^{(2)}(z;q)=2^{\nu}c_{\nu}(q)z^{1-\frac{\nu}{2}}J_{\nu}^{(2)}(\sqrt{z};q)=z+\sum_{n\geq2}\frac{(-1)^{n-1}q^{(n-1)(n-1+\nu)}}{4^{n-1}(q;q)_{n-1}(q^{\nu+1};q)_{n-1}}z^{n}
\end{equation}
and
\begin{equation}\label{third2}
h_{\nu}^{(3)}(z;q)=c_{\nu}(q)z^{1-\frac{\nu}{2}}J_{\nu}^{(3)}(\sqrt{z};q)=z+\sum_{n\geq2}\frac{(-1)^{n-1}q^{\frac{1}{2}(n-1)n}}{(q;q)_{n-1}(q^{\nu+1};q)_{n-1}}z^{n},
\end{equation}
where $c_{\nu}(q)=(q;q)_{\infty}\big/(q^{\nu+1};q)_{\infty}$. As a result, these functions are in the class $\mathcal{A}$. Now, we are ready to present our main results related to the some geometric properties and Hardy class of Jackson's second and third $q$-Bessel functions.

\begin{theorem}\label{The1}
	Let $\alpha\in\left[0,1\right)$, $q\in(0,1)$, $\nu>-1$ and
	\begin{equation}\label{The1-2.3}
	4(1-q)(1-q^{\nu})-q^{\nu}>0.
	\end{equation} The following assertions are true:
	\begin{itemize}
		\item [\textbf{a.}] If the inequality
		\begin{equation}\label{The1-2.4}
		\alpha\geq\frac{q^{2\nu}+8(1-q)^{2}(1-q^{\nu})^{2}-8q^{\nu}(1-q)(1-q^{\nu})}{q^{2\nu}+8(1-q)^{2}(1-q^{\nu})^{2}-6q^{\nu}(1-q)(1-q^{\nu})}
		\end{equation}
		holds, then the normalized $q-$Bessel function $z\mapsto{h_{\nu}^{(2)}(z;q)}$ is starlike of order $\alpha$ in $\mathbb{U}$.
		\item [\textbf{b.}] If the inequality
		\begin{equation}\label{The1-2.5}
		\alpha\geq\frac{2q^{3\nu}-24q^{2\nu}(1-q)(1-q^{\nu})+112q^{\nu}(1-q)^{2}(1-q^{\nu})^{2}-64(1-q)^{3}(1-q^{\nu})^{3}}{2q^{3\nu}-24q^{2\nu}(1-q)(1-q^{\nu})+80q^{\nu}(1-q)^{2}(1-q^{\nu})^{2}-64(1-q)^{3}(1-q^{\nu})^{3}}
		\end{equation}
		holds, then the normalized $q-$Bessel function $z\mapsto{h_{\nu}^{(2)}(z;q)}$ is convex of order $\alpha$ in $\mathbb{U}$.
	\end{itemize}
	
\end{theorem}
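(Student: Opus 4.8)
The plan is to apply Silverman's sufficient conditions---Lemma~\ref{Lemma1} for part~(a) and Lemma~\ref{Lemma2} for part~(b)---to the Taylor coefficients of $h_{\nu}^{(2)}$. Write $h_{\nu}^{(2)}(z;q)=z+\sum_{n\ge 2}b_{n}z^{n}$ with $b_{n}=\frac{(-1)^{n-1}q^{(n-1)(n-1+\nu)}}{4^{n-1}(q;q)_{n-1}(q^{\nu+1};q)_{n-1}}$, so that $|b_{n}|=\frac{q^{(n-1)(n-1+\nu)}}{4^{n-1}(q;q)_{n-1}(q^{\nu+1};q)_{n-1}}$. The first step is to dominate $|b_{n}|$ by a geometric term: the elementary estimates \eqref{ine.1}, \eqref{ine.3} and \eqref{ine.4} give
\begin{equation*}
|b_{n}|\le\frac{q^{(n-1)\nu}}{4^{n-1}(1-q)^{n-1}(1-q^{\nu})^{n-1}}=r^{\,n-1},\qquad r:=\frac{q^{\nu}}{4(1-q)(1-q^{\nu})}.
\end{equation*}
Here the hypothesis \eqref{The1-2.3} is exactly the assertion $0<r<1$; note that it also forces $\nu>0$, which is what makes \eqref{ine.4} a genuine lower bound. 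This reduces both parts to summing a weighted geometric series.

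For part~(a), by Lemma~\ref{Lemma1} it is enough to verify $\sum_{n\ge2}(n-\alpha)|b_{n}|\le 1-\alpha$. Since $n-\alpha>0$ for $n\ge 2$, the coefficient bound together with \eqref{sum1} and \eqref{sum2} yields
\begin{equation*}
\sum_{n\ge2}(n-\alpha)|b_{n}|\le\sum_{n\ge2}(n-\alpha)r^{n-1}=\frac{r(2-r)}{(1-r)^{2}}-\frac{\alpha r}{1-r},
\end{equation*}
so it suffices that the right-hand side be $\le 1-\alpha$. Multiplying through by $(1-r)^{2}>0$ and grouping the terms that carry $\alpha$ turns this into a linear inequality in $\alpha$; substituting $r=\frac{q^{\nu}}{4(1-q)(1-q^{\nu})}$ and cancelling the common factor $8(1-q)^{2}(1-q^{\nu})^{2}$ from the numerator and the denominator should reproduce precisely the bound \eqref{The1-2.4}. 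Part~(b) is entirely parallel: Lemma~\ref{Lemma2} requires $\sum_{n\ge2}n(n-\alpha)|b_{n}|\le 1-\alpha$, and now \eqref{sum2} and \eqref{sum3} give $\sum_{n\ge2}n(n-\alpha)|b_{n}|\le\frac{r(r^{2}-3r+4)}{(1-r)^{3}}-\alpha\frac{r(2-r)}{(1-r)^{2}}$; imposing that this be $\le 1-\alpha$, clearing $(1-r)^{3}>0$, grouping the $\alpha$-terms and resubstituting $r$ (cancelling $64(1-q)^{3}(1-q^{\nu})^{3}$ this time) should give \eqref{The1-2.5}.

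I expect the main obstacle to be the sign bookkeeping in this last reduction rather than the series sums, which are quoted outright in \eqref{sum1}--\eqref{sum3}. One must check that $(1-r)^{2}$ (resp.\ $(1-r)^{3}$) and the coefficient of $\alpha$ that survives after grouping have the signs needed for the division to run in the claimed direction, and that the resulting threshold actually lies in $[0,1)$---this is where $0<r<1$ (and, implicitly, $r$ not too large) enters a second time. A secondary, purely mechanical, point is confirming that the factor cancelled in each part is exactly the quantity common to the numerator and the denominator displayed in \eqref{The1-2.4} and \eqref{The1-2.5}; identifying these polynomials is lengthy but routine. Once both are in place, parts~(a) and~(b) follow at once from Lemmas~\ref{Lemma1} and~\ref{Lemma2}.
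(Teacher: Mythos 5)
Your proposal is correct and follows essentially the same route as the paper: bound the coefficients by the geometric ratio $r=\frac{q^{\nu}}{4(1-q)(1-q^{\nu})}$ via \eqref{ine.1}, \eqref{ine.3}, \eqref{ine.4}, evaluate the resulting weighted geometric series with \eqref{sum1}--\eqref{sum3}, and invoke Lemmas~\ref{Lemma1} and~\ref{Lemma2}; the final rearrangement into \eqref{The1-2.4} and \eqref{The1-2.5} that you sketch (and whose sign issues you correctly flag) is exactly the step the paper also leaves implicit. The only quibble is cosmetic: the quantity cleared from numerator and denominator is $8(1-q)^{2}(1-q^{\nu})^{2}$ as a multiplier (i.e.\ one multiplies through by it after clearing $(1-r)^{2}$), not a literal common factor that is cancelled, but the resulting expressions agree with \eqref{The1-2.4}.
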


\begin{proof}
	\begin{itemize}
		\item [\textbf{a.}]
	By virtue of the Silverman's result which is given in Lemma\eqref{Lemma1}, in order to prove the starlikeness of order $\alpha$ of the function, $z\mapsto{h_{\nu}^{(2)}(z;q)}$ it is enough to show that the following inequality
	\begin{equation}\label{kappa1}
	\kappa_{1}=\sum_{n\geq2}(n-\alpha)\left|\frac{(-1)^{n-1}q^{(n-1)(n-1+\nu)}}{4^{n-1}(q;q)_{n-1}(q^{\nu+1};q)_{n-1}}\right|\leq1-\alpha
	\end{equation}
	holds true under the hypothesis. Considering the inequalities \eqref{ine.1}, \eqref{ine.3} and \eqref{ine.4} together with the sums \eqref{sum1} and \eqref{sum2}, we may write that
	\begin{align*}
	\kappa_{1}&=\sum_{n\geq2}(n-\alpha)\left|\frac{(-1)^{n-1}q^{(n-1)(n-1+\nu)}}{4^{n-1}(q;q)_{n-1}(q^{\nu+1};q)_{n-1}}\right|\\&=\sum_{n\geq2}(n-\alpha)\frac{q^{(n-1)(n-1+\nu)}}{4^{n-1}(q;q)_{n-1}(q^{\nu+1};q)_{n-1}}\\&\leq\sum_{n\geq2}(n-\alpha)\frac{q^{(n-1)\nu}}{4^{n-1}(1-q)^{n-1}(1-q^{\nu})^{n-1}}\\&=\sum_{n\geq2}(n-\alpha)\left[\frac{q^{\nu}}{4(1-q)(1-q^{\nu})}\right]^{n-1}\\&=\sum_{n\geq2}n\left[\frac{q^{\nu}}{4(1-q)(1-q^{\nu})}\right]^{n-1}-\alpha\sum_{n\geq2}\left[\frac{q^{\nu}}{4(1-q)(1-q^{\nu})}\right]^{n-1}\\&=\frac{q^{\nu}\left(8(1-q)(1-q^{\nu})-q^{\nu}\right)}{\left(4(1-q)(1-q^{\nu})-q^{\nu}\right)^{2}}-\alpha\frac{q^{\nu}}{4(1-q)(1-q^{\nu})-q^{\nu}}.
	\end{align*}
	The inequality\eqref{The1-2.4} implies that the last sum is bounded above by $1-\alpha$. As a result, $\kappa_{1}\leq1-\alpha$ and so the function $z\mapsto{h_{\nu}^{(2)}(z;q)}$ is starlike of order $\alpha$ in $\mathbb{U}$.
	\item [\textbf{b.}] It is known from the Lemma\eqref{Lemma2} that to prove the convexity of order $\alpha$ of the function $z\mapsto{h_{\nu}^{(2)}(z;q)}$, it is enough to show that the following inequality
	\begin{equation}\label{kappa2}
	\kappa_{2}=\sum_{n\geq2}n(n-\alpha)\left|\frac{(-1)^{n-1}q^{(n-1)(n-1+\nu)}}{4^{n-1}(q;q)_{n-1}(q^{\nu+1};q)_{n-1}}\right|\leq1-\alpha
	\end{equation}
	is satisfied under our assumptions. Now, if we consider the inequalities \eqref{ine.1}, \eqref{ine.3} and \eqref{ine.4} together with the sums \eqref{sum2} and \eqref{sum3} then we may write that
	\begin{align*}
	\kappa_{2}&=\sum_{n\geq2}n(n-\alpha)\left|\frac{(-1)^{n-1}q^{(n-1)(n-1+\nu)}}{4^{n-1}(q;q)_{n-1}(q^{\nu+1};q)_{n-1}}\right|\\&=\sum_{n\geq2}n(n-\alpha)\frac{q^{(n-1)(n-1+\nu)}}{4^{n-1}(q;q)_{n-1}(q^{\nu+1};q)_{n-1}}\\&\leq\sum_{n\geq2}(n^{2}-n\alpha)\frac{q^{(n-1)\nu}}{4^{n-1}(1-q)^{n-1}(1-q^{\nu})^{n-1}}\\&=\sum_{n\geq2}(n^{2}-n\alpha)\left[\frac{q^{\nu}}{4(1-q)(1-q^{\nu})}\right]^{n-1}\\&=\sum_{n\geq2}n^{2}\left[\frac{q^{\nu}}{4(1-q)(1-q^{\nu})}\right]^{n-1}-\alpha\sum_{n\geq2}n\left[\frac{q^{\nu}}{4(1-q)(1-q^{\nu})}\right]^{n-1}\\&=\frac{q^{\nu}\left(64(1-q)^{2}(1-q^{\nu})^{2}-12q^{\nu}(1-q)(1-q^{\nu})+q^{2\nu}\right)}{\left(4(1-q)(1-q^{\nu})-q^{\nu}\right)^{3}}-\alpha\frac{q^{\nu}\left(8(1-q)(1-q^{\nu})-q^{\nu}\right)}{\left(4(1-q)(1-q^{\nu})-q^{\nu}\right)^{2}}.
	\end{align*}
	The inequality\eqref{The1-2.5} implies that the last sum is bounded above by $1-\alpha$. As a result, $\kappa_{2}\leq1-\alpha$ and so the function $z\mapsto{h_{\nu}^{(2)}(z;q)}$ is convex of order $\alpha$ in $\mathbb{U}$.
\end{itemize}
\end{proof}

\begin{theorem}\label{The2}
	Let $\alpha\in\left[0,1\right)$, $q\in(0,1)$, $\nu>-1$ and 
	\begin{equation}\label{The2-2.8}
	(1-q)(1-q^{\nu})-\sqrt{q}>0.
	\end{equation} The next two assertions are hold:
	\begin{itemize}
		\item [\textbf{a.}]If the inequality
		\begin{equation}\label{The2-2.9}
		\alpha\geq\frac{2\sqrt{q}(1-q)(1-q^{\nu})-q-\left((1-q)(1-q^{\nu})-\sqrt{q}\right)^{2}}{\left((1-q)(1-q^{\nu})-\sqrt{q}\right)\left(2\sqrt{q}-(1-q)(1-q^{\nu})\right)}
		\end{equation}
		holds, then the normalized $q-$Bessel function $z\mapsto{h_{\nu}^{(3)}(z;q)}$ is starlike of order $\alpha$ in $\mathbb{U}$.
		\item [\textbf{b.}]If the inequality
		\begin{equation}\label{The2-2.10}
		\alpha\geq\frac{4\sqrt{q}(1-q)^{2}(1-q^{\nu})^{2}-3q(1-q)(1-q^{\nu})+q\sqrt{q}-\left((1-q)(1-q^{\nu})-\sqrt{q}\right)^{3}}{\left((1-q)(1-q^{\nu})-\sqrt{q}\right)\left(2\sqrt{q}(1-q)(1-q^{\nu})-q-\left((1-q)(1-q^{\nu})-\sqrt{q}\right)^{2}\right)}
		\end{equation}
		holds, then the normalized $q-$Bessel function $z\mapsto{h_{\nu}^{(3)}(z;q)}$ is convex of order $\alpha$ in $\mathbb{U}$.
	\end{itemize}
	
\end{theorem}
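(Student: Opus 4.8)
The plan is to follow the proof of Theorem~\ref{The1} essentially line by line, with the coefficients of $h_{\nu}^{(2)}$ replaced by those of $h_{\nu}^{(3)}$ given in \eqref{third2} and with the elementary bound \eqref{ine.2} used in place of \eqref{ine.1}.

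For part \textbf{a}, by Lemma~\ref{Lemma1} it suffices to show that
\[
\kappa_{3}:=\sum_{n\geq2}(n-\alpha)\left|\frac{(-1)^{n-1}q^{\frac12(n-1)n}}{(q;q)_{n-1}(q^{\nu+1};q)_{n-1}}\right|\leq 1-\alpha.
\]
Applying \eqref{ine.2}, \eqref{ine.3} and \eqref{ine.4}, I would dominate the $n$-th term by $(n-\alpha)\,r^{n-1}$, where $r=\sqrt q\big/\big((1-q)(1-q^{\nu})\big)$; the hypothesis \eqref{The2-2.8} is precisely the statement $0<r<1$, which guarantees that the majorant series converges. Using \eqref{sum1} and \eqref{sum2} this gives the closed form
\[
\kappa_{3}\leq\frac{r(2-r)}{(1-r)^{2}}-\alpha\,\frac{r}{1-r},
\]
and a short rearrangement shows that the right-hand side is at most $1-\alpha$ exactly when \eqref{The2-2.9} holds. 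Hence $z\mapsto h_{\nu}^{(3)}(z;q)$ is starlike of order $\alpha$ in $\mathbb{U}$.

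For part \textbf{b}, the argument is the same with Lemma~\ref{Lemma2} in place of Lemma~\ref{Lemma1}: it is enough to verify that
\[
\kappa_{4}:=\sum_{n\geq2}n(n-\alpha)\left|\frac{(-1)^{n-1}q^{\frac12(n-1)n}}{(q;q)_{n-1}(q^{\nu+1};q)_{n-1}}\right|\leq 1-\alpha,
\]
and the same three inequalities dominate $\kappa_{4}$ by $\sum_{n\geq2}(n^{2}-n\alpha)\,r^{n-1}$ with the same $r$. Evaluating this with \eqref{sum2} and \eqref{sum3} yields
\[
\kappa_{4}\leq\frac{r(r^{2}-3r+4)}{(1-r)^{3}}-\alpha\,\frac{r(2-r)}{(1-r)^{2}},
\]
and again a rearrangement shows that the bound $\kappa_{4}\leq 1-\alpha$ is equivalent to \eqref{The2-2.10}, so $z\mapsto h_{\nu}^{(3)}(z;q)$ is convex of order $\alpha$ in $\mathbb{U}$.

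The only real work lies in the algebra of these two rearrangements: one substitutes $r=\sqrt q/((1-q)(1-q^{\nu}))$, clears denominators, and confirms that what remains is exactly \eqref{The2-2.9}, respectively \eqref{The2-2.10}. I expect the point requiring most care to be the sign of the factor one divides through by when solving the inequality for $\alpha$ — namely $2\sqrt q-(1-q)(1-q^{\nu})$ in part \textbf{a} and $2\sqrt q(1-q)(1-q^{\nu})-q-\big((1-q)(1-q^{\nu})-\sqrt q\big)^{2}$ in part \textbf{b} — since this sign fixes the direction of the final inequality; here \eqref{The2-2.8} is the assumption that makes the factor $(1-q)(1-q^{\nu})-\sqrt q$ appearing in the denominators of \eqref{The2-2.9} and \eqref{The2-2.10} positive. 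Apart from this, everything parallels the proof of Theorem~\ref{The1} verbatim.
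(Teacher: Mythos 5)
Your proposal is correct and coincides with the paper's own proof: the author likewise invokes Lemmas~\ref{Lemma1} and~\ref{Lemma2}, bounds the coefficients via \eqref{ine.2}, \eqref{ine.3}, \eqref{ine.4} to majorize $\kappa_3$ and $\kappa_4$ by geometric-type sums in $r=\sqrt{q}/\big((1-q)(1-q^{\nu})\big)$, evaluates them with \eqref{sum1}--\eqref{sum3}, and concludes from \eqref{The2-2.9} and \eqref{The2-2.10}. The sign issue you flag for the factor $2\sqrt{q}-(1-q)(1-q^{\nu})$ is real but is passed over silently in the paper as well, so your write-up is, if anything, slightly more careful.
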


\begin{proof}
	\begin{itemize}
		\item [\textbf{a.}]By virtue of the Silverman's result which is given in Lemma\eqref{Lemma1}, in order to prove the starlikeness of order $\alpha$ of the function, $z\mapsto{h_{\nu}^{(3)}(z;q)}$ it is enough to show that the following inequality
		\begin{equation}\label{kappa3}
		\kappa_{3}=\sum_{n\geq2}(n-\alpha)\left|\frac{(-1)^{n-1}q^{\frac{1}{2}n(n-1)}}{(q;q)_{n-1}(q^{\nu+1};q)_{n-1}}\right|\leq1-\alpha
		\end{equation}
		holds true under the hypothesis. Considering the inequalities \eqref{ine.2}, \eqref{ine.3} and \eqref{ine.4} together with the sums \eqref{sum1} and \eqref{sum2}, we may write that
		\begin{align*}
		\kappa_{3}&=\sum_{n\geq2}(n-\alpha)\left|\frac{(-1)^{n-1}q^{\frac{1}{2}n(n-1)}}{(q;q)_{n-1}(q^{\nu+1};q)_{n-1}}\right|\\&=\sum_{n\geq2}(n-\alpha)\frac{q^{\frac{1}{2}n(n-1)}}{(q;q)_{n-1}(q^{\nu+1};q)_{n-1}}\\&\leq\sum_{n\geq2}(n-\alpha)\frac{q^{\frac{1}{2}(n-1)}}{(1-q)^{n-1}(1-q^{\nu})^{n-1}}\\&=\sum_{n\geq2}n\left[\frac{\sqrt{q}}{(1-q)(1-q^{\nu})}\right]^{n-1}-\alpha\sum_{n\geq2}\left[\frac{\sqrt{q}}{(1-q)(1-q^{\nu})}\right]^{n-1}\\&=\frac{2\sqrt{q}(1-q)(1-q^{\nu})-q}{\left((1-q)(1-q^{\nu})-\sqrt{q}\right)^{2}}-\alpha\frac{\sqrt{q}}{(1-q)(1-q^{\nu})-\sqrt{q}}
		\end{align*}
		The inequality\eqref{The2-2.9} implies that the last sum is bounded above by $1-\alpha$. As a result, $\kappa_{3}\leq1-\alpha$ and so the function $z\mapsto{h_{\nu}^{(3)}(z;q)}$ is starlike of order $\alpha$ in $\mathbb{U}$.
		\item [\textbf{b.}]It is known from the Lemma\eqref{Lemma2} that to prove the convexity of order $\alpha$ of the function $z\mapsto{h_{\nu}^{(3)}(z;q)}$, it is enough to show that the following inequality
		\begin{equation}\label{kappa4}
		\kappa_{4}=\sum_{n\geq2}n(n-\alpha)\left|\frac{(-1)^{n-1}q^{\frac{1}{2}n(n-1)}}{(q;q)_{n-1}(q^{\nu+1};q)_{n-1}}\right|\leq1-\alpha
		\end{equation}
		is satisfied under the assumptions of Theorem\eqref{The4}. Now, if we consider the inequalities \eqref{ine.2}, \eqref{ine.3} and \eqref{ine.4} together with the sums \eqref{sum2} and \eqref{sum3} then we may write that
		\begin{align*}
		\kappa_{4}&=\sum_{n\geq2}n(n-\alpha)\left|\frac{(-1)^{n-1}q^{\frac{1}{2}n(n-1)}}{(q;q)_{n-1}(q^{\nu+1};q)_{n-1}}\right|\\&=\sum_{n\geq2}n(n-\alpha)\frac{q^{\frac{1}{2}n(n-1)}}{(q;q)_{n-1}(q^{\nu+1};q)_{n-1}}\\&\leq\sum_{n\geq2}(n^{2}-n\alpha)\frac{(\sqrt{q})^{n-1}}{(1-q)^{n-1}(1-q^{\nu})^{n-1}}\\&=\sum_{n\geq2}n^{2}\left[\frac{\sqrt{q}}{(1-q)(1-q^{\nu})}\right]^{n-1}-\alpha\sum_{n\geq2}n\left[\frac{\sqrt{q}}{(1-q)(1-q^{\nu})}\right]^{n-1}\\&=\frac{4\sqrt{q}(1-q)^{2}(1-q^{\nu})^{2}-3q(1-q)(1-q^{\nu})+q\sqrt{q}}{\left((1-q)(1-q^{\nu})-\sqrt{q}\right)^{3}}-\alpha\frac{2\sqrt{q}(1-q)(1-q^{\nu})-q}{\left((1-q)(1-q^{\nu})-\sqrt{q}\right)^{2}}.
		\end{align*}
		The inequality\eqref{The2-2.10} implies that the last sum is bounded above by $1-\alpha$. As a result, $\kappa_{4}\leq1-\alpha$ and so the function $z\mapsto{h_{\nu}^{(3)}(z;q)}$ is convex of order $\alpha$ in $\mathbb{U}$. 
	\end{itemize}
	 
\end{proof}

\begin{theorem}\label{The3} 
	Let $\alpha\in\left[0,1\right)$, $q\in(0,1)$ and $\nu>-1$. The following assertions hold true:
	\begin{itemize}
		\item [\textbf{a.}] If the inequlity\eqref{The1-2.3} is satisfied
		and
		\begin{equation}\label{The3-2.13}
		\alpha<\frac{4(1-q)(1-q^{\nu})-2q^{\nu}}{4(1-q)(1-q^{\nu})-q^{\nu}},
		\end{equation}
		then the function $\frac{h_{\nu}^{(2)}(z;q)}{z}$ is in the class $\mathcal{P}_{0}(\alpha)$.
		\item [\textbf{b.}] If the inequlity\eqref{The2-2.8} is satisfied
		and
		\begin{equation}\label{The3-2.14}
		\alpha<\frac{(1-q)(1-q^{\nu})-2\sqrt{q}}{(1-q)(1-q^{\nu})-\sqrt{q}},
		\end{equation}
		then the function $\frac{h_{\nu}^{(3)}(z;q)}{z}$ is in the class $\mathcal{P}_{0}(\alpha)$.
	\end{itemize}
	
\end{theorem}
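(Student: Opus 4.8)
The plan is to rely on the elementary fact that if $p$ is analytic on $\mathbb{U}$ with $p(0)=1$ and $|p(z)-1|<1-\alpha$ for every $z\in\mathbb{U}$, then $\real p(z)=1+\real\bigl(p(z)-1\bigr)\geq 1-|p(z)-1|>\alpha$, so that $p\in\mathcal{P}_{0}(\alpha)$. I would apply this with $p(z)=h_{\nu}^{(2)}(z;q)/z$ in part \textbf{a} and $p(z)=h_{\nu}^{(3)}(z;q)/z$ in part \textbf{b}; in both cases $p(0)=1$ by \eqref{second2} and \eqref{third2}, so the whole problem reduces to estimating $|p(z)-1|$ by the series of absolute values of its coefficients.

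For part \textbf{a}, reading off $h_{\nu}^{(2)}(z;q)/z-1$ from \eqref{second2}, applying the triangle inequality, using $|z|^{n-1}<1$ for $n\geq 2$ when $|z|<1$, and then the inequalities \eqref{ine.1}, \eqref{ine.3} and \eqref{ine.4}, I would obtain
\begin{align*}
\left|\frac{h_{\nu}^{(2)}(z;q)}{z}-1\right|
&\leq\sum_{n\geq2}\frac{q^{(n-1)(n-1+\nu)}}{4^{n-1}(q;q)_{n-1}(q^{\nu+1};q)_{n-1}}\\
&\leq\sum_{n\geq2}\left[\frac{q^{\nu}}{4(1-q)(1-q^{\nu})}\right]^{n-1}.
\end{align*}
The hypothesis \eqref{The1-2.3} is exactly what guarantees $q^{\nu}/\bigl(4(1-q)(1-q^{\nu})\bigr)<1$, so \eqref{sum1} sums the last series to $q^{\nu}\big/\bigl(4(1-q)(1-q^{\nu})-q^{\nu}\bigr)$. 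A one-line rearrangement then shows that \eqref{The3-2.13} is precisely the statement
$$\frac{q^{\nu}}{4(1-q)(1-q^{\nu})-q^{\nu}}<1-\alpha,$$
which gives $|h_{\nu}^{(2)}(z;q)/z-1|<1-\alpha$ on $\mathbb{U}$ and hence $h_{\nu}^{(2)}(z;q)/z\in\mathcal{P}_{0}(\alpha)$.

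Part \textbf{b} would be handled in exactly the same manner, with \eqref{third2} in place of \eqref{second2}, the inequality \eqref{ine.2} in place of \eqref{ine.1}, and the hypothesis \eqref{The2-2.8} in place of \eqref{The1-2.3}. The estimate now terminates at the geometric series $\sum_{n\geq2}\bigl[\sqrt{q}/((1-q)(1-q^{\nu}))\bigr]^{n-1}$, which converges by \eqref{The2-2.8} and sums to $\sqrt{q}\big/\bigl((1-q)(1-q^{\nu})-\sqrt{q}\bigr)$ by \eqref{sum1}; and \eqref{The3-2.14} is exactly the inequality asserting that this bound is less than $1-\alpha$.

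There is no real obstacle in this argument; the only points that need a little care are (i) that the inequality is strict, which is automatic because $|z|<1$ forces every $|z|^{n-1}$ with $n\geq 2$ to be strictly below $1$ (and the $n=2$ coefficient is nonzero), and (ii) that the geometric series actually converges, which is exactly what the positivity hypotheses \eqref{The1-2.3} and \eqref{The2-2.8} supply. The only genuine bookkeeping is checking that the closed forms $q^{\nu}\big/\bigl(4(1-q)(1-q^{\nu})-q^{\nu}\bigr)$ and $\sqrt{q}\big/\bigl((1-q)(1-q^{\nu})-\sqrt{q}\bigr)$ match the fractions on the right-hand sides of \eqref{The3-2.13} and \eqref{The3-2.14} after writing $1-\alpha$ over a common denominator.
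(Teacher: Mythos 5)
Your argument is correct and follows essentially the same route as the paper: the paper introduces the auxiliary function $p(z)=\frac{1}{1-\alpha}\bigl(h_{\nu}^{(k)}(z;q)/z-\alpha\bigr)$ and checks $|p(z)-1|<1$, which is literally the same condition as your $|h_{\nu}^{(k)}(z;q)/z-1|<1-\alpha$, established by the same coefficient estimates \eqref{ine.1}--\eqref{ine.4} and the same geometric series summation. The algebraic equivalence of the resulting bounds with \eqref{The3-2.13} and \eqref{The3-2.14} checks out, so nothing is missing.
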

\begin{proof}
\begin{itemize}
	\item [\textbf{a.}] In order to prove $\frac{h_{\nu}^{(2)}(z;q)}{z}\in\mathcal{P}_{0}(\alpha)$, it is enough to show that $\real\left(\frac{h_{\nu}^{(2)}(z;q)}{z}\right)>\alpha.$ For this purpose, consider the function $p(z)=\frac{1}{1-\alpha}\left(\frac{h_{\nu}^{(2)}(z;q)}{z}-\alpha\right)$. It can be easly seen that $\left|p(z)-1\right|<1$ implies $\real\left(\frac{h_{\nu}^{(2)}(z;q)}{z}\right)>\alpha.$ Now, using the inequalities \eqref{ine.1}, \eqref{ine.3}, \eqref{ine.4} and the well known geometric series sum, we have 
	\begin{align*}
	\left|p(z)-1\right|&=\left|\frac{1}{1-\alpha}\left[1+\sum_{n\geq2}\frac{(-1)^{n-1}q^{(n-1)(n-1+\nu)}}{4^{n-1}(q;q)_{n-1}(q^{\nu+1};q)_{n-1}}z^{n-1}-\alpha\right]-1\right|\\&\leq\frac{1}{1-\alpha}\sum_{n\geq2}\left(\frac{q^{\nu}}{4(1-q)(1-q^{\nu})}\right)^{n-1}\\&=\frac{q^{\nu}}{4(1-\alpha)(1-q)(1-q^{\nu})}\sum_{n\geq0}\left(\frac{q^{\nu}}{4(1-q)(1-q^{\nu})}\right)^{n}\\&=\frac{q^{\nu}}{(1-\alpha)\left[4(1-q)(1-q^{\nu})-q^{\nu}\right]}.
	\end{align*}
	It follows from the inequality \eqref{The3-2.13} that $\left|p(z)-1\right|<1,$ and hence $\frac{h_{\nu}^{(2)}(z;q)}{z}\in\mathcal{P}_{0}(\alpha)$.
	\item [\textbf{b.}] Similarly, let define the function $t(z)=\frac{1}{1-\alpha}\left(\frac{h_{\nu}^{(3)}(z;q)}{z}-\alpha\right)$. By making use of the inequalities \eqref{ine.2}, \eqref{ine.3}, \eqref{ine.4} and the geometric series sum, we can write that
	\begin{align*}
	\left|t(z)-1\right|&=\left|\frac{1}{1-\alpha}\left[1+\sum_{n\geq2}\frac{(-1)^{n-1}q^{\frac{1}{2}n(n-1)}}{(q;q)_{n-1}(q^{\nu+1};q)_{n-1}}z^{n-1}-\alpha\right]-1\right|\\&\leq\frac{1}{1-\alpha}\sum_{n\geq2}\left(\frac{\sqrt{q}}{(1-q)(1-q^{\nu})}\right)^{n-1}\\&=\frac{\sqrt{q}}{(1-\alpha)(1-q)(1-q^{\nu})}\sum_{n\geq0}\left(\frac{\sqrt{q}}{(1-q)(1-q^{\nu})}\right)^{n}\\&=\frac{\sqrt{q}}{(1-\alpha)\left[(1-q)(1-q^{\nu})-\sqrt{q}\right]}.
	\end{align*}
	But, the inequality \eqref{The3-2.14} implies that $\left|t(z)-1\right|<1.$ Therefore, we get $\frac{h_{\nu}^{(3)}(z;q)}{z}$ is in the class $\mathcal{P}_{0}(\alpha)$, and the proof is completed.
\end{itemize}
\end{proof}

Setting $\alpha=0$ and $\alpha=\frac{1}{2}$ in the Theorem\eqref{The3}, respectively, we have:
\begin{corollary}\label{Cor}
		Let $q\in(0,1)$ and $\nu>-1$. The next claims are true:
		\begin{itemize}
			\item [\textbf{i.}] If $2(1-q)(1-q^{\nu})-q^{\nu}>0$, then $\frac{h_{\nu}^{(2)}(z;q)}{z}\in\mathcal{P}$.
			\item [\textbf{ii.}] If $(1-q)(1-q^{\nu})-2\sqrt{q}>0$, then $\frac{h_{\nu}^{(3)}(z;q)}{z}\in\mathcal{P}$.
			\item [\textbf{iii.}] If $4(1-q)(1-q^{\nu})-3q^{\nu}>0$, then $\frac{h_{\nu}^{(2)}(z;q)}{z}\in\mathcal{P}_{0}\left(\frac{1}{2}\right)$.
			\item [\textbf{vi.}] If $(1-q)(1-q^{\nu})-3\sqrt{q}>0$, then $\frac{h_{\nu}^{(3)}(z;q)}{z}\in\mathcal{P}_{0}\left(\frac{1}{2}\right)$.
		\end{itemize}
\end{corollary}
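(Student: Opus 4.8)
The plan is to derive all four claims by specializing Theorem~\ref{The3} to $\alpha=0$ and to $\alpha=\tfrac12$ and then simplifying the resulting rational inequalities. For (i) and (iii) I would invoke part~(a) of Theorem~\ref{The3} (for the function $h_\nu^{(2)}(z;q)/z$), and for (ii) and (iv) part~(b) (for $h_\nu^{(3)}(z;q)/z$); in each case the conclusion $h_\nu^{(k)}(z;q)/z\in\mathcal{P}_0(\alpha)$ is exactly what Theorem~\ref{The3} provides, once its two hypotheses are checked, and one uses $\mathcal{P}_0(0)=\mathcal{P}$.

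First I would treat (i). Putting $\alpha=0$ in \eqref{The3-2.13} reduces that condition to $0<\dfrac{4(1-q)(1-q^{\nu})-2q^{\nu}}{4(1-q)(1-q^{\nu})-q^{\nu}}$. Since the corollary hypothesis $2(1-q)(1-q^{\nu})-q^{\nu}>0$ says precisely that the numerator $4(1-q)(1-q^{\nu})-2q^{\nu}=2\bigl(2(1-q)(1-q^{\nu})-q^{\nu}\bigr)$ is positive, and since the denominator $4(1-q)(1-q^{\nu})-q^{\nu}$ exceeds that numerator and is therefore also positive — which is exactly condition \eqref{The1-2.3} — both hypotheses of Theorem~\ref{The3}(a) hold, giving $h_\nu^{(2)}(z;q)/z\in\mathcal{P}$. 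Part (ii) is handled the same way with \eqref{The3-2.14} in place of \eqref{The3-2.13}: the hypothesis $(1-q)(1-q^{\nu})-2\sqrt q>0$ makes the numerator of the fraction in \eqref{The3-2.14} positive and the denominator $(1-q)(1-q^{\nu})-\sqrt q$ larger still, so \eqref{The2-2.8} holds and Theorem~\ref{The3}(b) applies.

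For (iii) and (iv) I would substitute $\alpha=\tfrac12$. The key ordering here is: first deduce the underlying positivity condition directly from the corollary hypothesis, and only then clear the (now known to be positive) denominator. In the case of $h_\nu^{(2)}$, the hypothesis $4(1-q)(1-q^{\nu})-3q^{\nu}>0$ gives $4(1-q)(1-q^{\nu})-q^{\nu}=\bigl(4(1-q)(1-q^{\nu})-3q^{\nu}\bigr)+2q^{\nu}>0$, i.e. \eqref{The1-2.3}; multiplying the required inequality $\tfrac12<\dfrac{4(1-q)(1-q^{\nu})-2q^{\nu}}{4(1-q)(1-q^{\nu})-q^{\nu}}$ through by this positive denominator and rearranging reproduces exactly $4(1-q)(1-q^{\nu})-3q^{\nu}>0$, so \eqref{The3-2.13} with $\alpha=\tfrac12$ holds and $h_\nu^{(2)}(z;q)/z\in\mathcal{P}_0(\tfrac12)$. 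The identical manipulation applied to \eqref{The3-2.14}: the hypothesis $(1-q)(1-q^{\nu})-3\sqrt q>0$ yields \eqref{The2-2.8}, and clearing the denominator in $\tfrac12<\dfrac{(1-q)(1-q^{\nu})-2\sqrt q}{(1-q)(1-q^{\nu})-\sqrt q}$ returns $(1-q)(1-q^{\nu})-3\sqrt q>0$, giving (iv).

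There is no real obstacle in this argument — it is pure substitution and linear algebra with positive denominators. The only point worth stating carefully, as above, is that at each step the simpler hypothesis appearing in the corollary is strictly stronger than the corresponding baseline positivity condition \eqref{The1-2.3} or \eqref{The2-2.8}, so one is legitimately entitled both to divide by the denominator in \eqref{The3-2.13}/\eqref{The3-2.14} and to invoke Theorem~\ref{The3}; all four such verifications are immediate.
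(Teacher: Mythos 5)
Your proposal is correct and is exactly the paper's route: the paper proves the corollary simply by setting $\alpha=0$ and $\alpha=\tfrac12$ in Theorem~\ref{The3}, and your verification that each corollary hypothesis implies both the relevant positivity condition (\eqref{The1-2.3} or \eqref{The2-2.8}) and the corresponding bound \eqref{The3-2.13}/\eqref{The3-2.14} supplies the (omitted) details faithfully. The algebra in all four cases checks out, since $q^{\nu}>0$ and $\sqrt{q}>0$ make each denominator exceed the corresponding numerator by a positive amount.
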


\begin{theorem}\label{The4}
	Let $\alpha\in\left[0,1\right)$, $q\in(0,1)$, $\nu>-1$. If the inequalities \eqref{The1-2.3} and \eqref{The1-2.5} are satisfied, then 
	$h_{\nu}^{(2)}(z;q)\in\mathcal{H}^{\frac{1}{1-2\alpha}}$ for $\alpha\in\left[0,\frac{1}{2}\right)$ and $h_{\nu}^{(2)}(z;q)\in\mathcal{H}^{\infty}$ for $\alpha\in\left(\frac{1}{2},1\right).$
		
\end{theorem}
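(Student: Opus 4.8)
The strategy is to derive this from Theorem~\ref{The1}(b) and Lemma~\ref{Lemma3}. First I would note that the two hypotheses \eqref{The1-2.3} and \eqref{The1-2.5} are exactly the conditions of Theorem~\ref{The1}(b); hence $h_{\nu}^{(2)}(\cdot\,;q)$ is convex of order $\alpha$, i.e.\ $h_{\nu}^{(2)}(\cdot\,;q)\in\mathcal{C}(\alpha)$. Since by \eqref{second2} this function is normalized (it belongs to $\mathcal{A}$), Lemma~\ref{Lemma3} will be applicable as soon as we check that $h_{\nu}^{(2)}(\cdot\,;q)$ is \emph{not} one of the exceptional functions in \eqref{forms}.

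The exclusion of the exceptional forms is the one step that needs a small argument. Because the range of $\alpha$ in the statement avoids $\alpha=\tfrac12$, only the first line of \eqref{forms}, namely $f(z)=k+lz(1-ze^{i\theta})^{2\alpha-1}$, is relevant. If $h_{\nu}^{(2)}(z;q)$ coincided with such an $f$ on $\mathbb{U}$, then comparing the coefficients of $z^{0}$ and $z^{1}$ (using $h_{\nu}^{(2)}(z;q)=z+\sum_{n\ge2}a_{n}z^{n}$ from \eqref{second2}) forces $k=0$ and $l=1$, so that $h_{\nu}^{(2)}(z;q)=z(1-ze^{i\theta})^{2\alpha-1}$ in $\mathbb{U}$. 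The right-hand side cannot be continued analytically across $\partial\mathbb{U}$: it has a singularity at $z=e^{-i\theta}$ (a pole if $\alpha=0$, a branch point otherwise) since $2\alpha-1\notin\{0,1,2,\dots\}$ for $\alpha\in[0,1)\setminus\{\tfrac12\}$. On the other hand, from \eqref{second2} the Taylor coefficients of $h_{\nu}^{(2)}$ satisfy $\lvert a_{n}\rvert=q^{(n-1)(n-1+\nu)}\big/\bigl(4^{n-1}(q;q)_{n-1}(q^{\nu+1};q)_{n-1}\bigr)$, and because $q\in(0,1)$ the factor $q^{(n-1)(n-1+\nu)}$ makes $\limsup_{n\to\infty}\lvert a_{n}\rvert^{1/n}=0$; thus $h_{\nu}^{(2)}(\cdot\,;q)$ is entire. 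An entire function cannot agree on $\mathbb{U}$ with a function singular at a point of $\partial\mathbb{U}$, a contradiction. Hence $h_{\nu}^{(2)}(\cdot\,;q)$ is not of the form \eqref{forms}.

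Once the hypotheses of Lemma~\ref{Lemma3} have been verified, the conclusion splits into two cases. If $\alpha\in[0,\tfrac12)$, part (b) of Lemma~\ref{Lemma3} yields some $\tau=\tau\bigl(h_{\nu}^{(2)}\bigr)>0$ with $h_{\nu}^{(2)}(\cdot\,;q)\in\mathcal{H}^{\tau+\frac{1}{1-2\alpha}}$; since $\tau+\tfrac{1}{1-2\alpha}>\tfrac{1}{1-2\alpha}>0$ and $\mathcal{H}^{q}\subset\mathcal{H}^{p}$ whenever $0<p\le q\le\infty$, we obtain $h_{\nu}^{(2)}(\cdot\,;q)\in\mathcal{H}^{\frac{1}{1-2\alpha}}$. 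If $\alpha\in(\tfrac12,1)$, then $\alpha\ge\tfrac12$, so part (c) of Lemma~\ref{Lemma3} gives $h_{\nu}^{(2)}(\cdot\,;q)\in\mathcal{H}^{\infty}$ at once. This completes the proof.

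I expect the only genuine obstacle to be the exclusion of the exceptional forms; everything else is a direct invocation of results already in the paper (Theorem~\ref{The1}, Lemma~\ref{Lemma3}, and the inclusion $\mathcal{H}^{q}\subset\mathcal{H}^{p}$). The entirety argument above seems the cleanest route; matching the Taylor coefficients of $z(1-ze^{i\theta})^{2\alpha-1}$ with those of $h_{\nu}^{(2)}$ term by term would also work but be considerably more tedious.
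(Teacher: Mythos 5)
Your proposal is correct and follows the same overall route as the paper: combine Theorem~\ref{The1}(b) (convexity of order $\alpha$ under \eqref{The1-2.3} and \eqref{The1-2.5}) with Lemma~\ref{Lemma3}, the only substantive step being the exclusion of the exceptional forms \eqref{forms}. Where you differ is in how that exclusion is justified: the paper expands $k+lz(1-ze^{i\theta})^{2\alpha-1}$ and $k+l\log(1-ze^{i\theta})$ as power series and simply asserts by inspection of \eqref{second2} that $h_{\nu}^{(2)}$ is not of these forms, whereas you normalize $k=0$, $l=1$ by matching the constant and linear coefficients and then observe that $h_{\nu}^{(2)}(\cdot\,;q)$ is entire (since $q^{(n-1)(n-1+\nu)}$ forces $|a_n|^{1/n}\to0$) while $z(1-ze^{i\theta})^{2\alpha-1}$ is singular at $z=e^{-i\theta}$ because $2\alpha-1$ is never a nonnegative integer for $\alpha\in[0,1)\setminus\{\tfrac12\}$. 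Your argument is the more rigorous of the two at precisely the point where the paper is weakest, and your explicit use of the inclusion $\mathcal{H}^{q}\subset\mathcal{H}^{p}$ to drop from the exponent $\tau+\frac{1}{1-2\alpha}$ to $\frac{1}{1-2\alpha}$ makes explicit a step the paper leaves implicit.
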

\begin{proof}
	It is known that Gauss hypergeometric function is defined by
\begin{equation}\label{Gauss}
_{2}F_{1}(a,b,c;z)=\sum_{n\geq0}\frac{(a)_{n}(b)_{n}}{(c)_{n}}\frac{z^n}{n!}.
\end{equation}
Now, using the equality\eqref{Gauss} it is possible to show that the function $z\mapsto h_{\nu}^{(2)}(z;q)$ can not be written in the forms which are given by \eqref{forms} for corresponding values of $\alpha$. More precisely, we can write that the following equalities: 
\begin{equation}\label{form1}
k+\frac{lz}{(1-ze^{i\theta})^{1-2\alpha}}=k+l\sum_{n\geq0}\frac{(1-2\alpha)_{n}}{n!}e^{i\theta{n}}z^{n+1}
\end{equation}
and
\begin{equation}\label{form2}
k+l\log{(1-ze^{i\theta})}=k-l\sum_{n\geq0}\frac{1}{n+1}e^{i\theta{n}}z^{n+1}
\end{equation}
hold true for $ k,l\in\mathbb{C}$ and $\theta\in\mathbb{R}$. If we consider the series representation of the function $z\mapsto h_{\nu}^{(2)}(z;q)$ which is given by \eqref{second2}, then we see that the function $z\mapsto h_{\nu}^{(2)}(z;q)$ is not of the forms \eqref{form1} for $\alpha\neq\frac{1}{2}$ and \eqref{form2} for $\alpha=\frac{1}{2}$, respectively. On the other hand, the part \textbf{b.} of Theorem\eqref{The1} states that the function $z\mapsto h_{\nu}^{(2)}(z;q)$ is convex of order $\alpha$ under hypothesis. Therefore, the proof is completed by applying Lemma\eqref{Lemma3}.
\end{proof}

\begin{theorem}\label{The5}
	Let $\alpha\in\left[0,1\right)$, $q\in(0,1)$, $\nu>-1$. If the inequalities \eqref{The2-2.8} and \eqref{The2-2.10} are satisfied, then $h_{\nu}^{(3)}(z;q)\in\mathcal{H}^{\frac{1}{1-2\alpha}}$ for $\alpha\in\left[0,\frac{1}{2}\right)$ and $h_{\nu}^{(3)}(z;q)\in\mathcal{H}^{\infty}$ for $\alpha\in\left(\frac{1}{2},1\right).$
\end{theorem}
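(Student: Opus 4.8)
The plan is to mirror exactly the proof of Theorem~\ref{The4}, replacing $h_{\nu}^{(2)}$ by $h_{\nu}^{(3)}$ throughout and invoking the part of Theorem~\ref{The2} that establishes convexity of order $\alpha$. The only two ingredients needed for Lemma~\ref{Lemma3} are: (i) that $z\mapsto h_{\nu}^{(3)}(z;q)$ is convex of order $\alpha$ in $\mathbb{U}$, which is precisely the conclusion of part \textbf{b.} of Theorem~\ref{The2} under the hypotheses \eqref{The2-2.8} and \eqref{The2-2.10}; and (ii) that $h_{\nu}^{(3)}(z;q)$ is \emph{not} one of the exceptional functions listed in \eqref{forms} of Lemma~\ref{Lemma3}. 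Once both are in hand, parts \textbf{b.} and \textbf{c.} of Lemma~\ref{Lemma3} give the two claimed conclusions: $h_{\nu}^{(3)}(z;q)\in\mathcal{H}^{\tau+\frac{1}{1-2\alpha}}\subset\mathcal{H}^{\frac{1}{1-2\alpha}}$ for $\alpha\in[0,\tfrac12)$ (using $\mathcal{H}^{q}\subset\mathcal{H}^{p}$ for $p\le q$), and $h_{\nu}^{(3)}(z;q)\in\mathcal{H}^{\infty}$ for $\alpha\in(\tfrac12,1)$ (the value $\alpha=\tfrac12$ is excluded in the statement, matching the split in Lemma~\ref{Lemma3}).

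First I would recall the series expansion \eqref{third2},
$$
h_{\nu}^{(3)}(z;q)=z+\sum_{n\geq2}\frac{(-1)^{n-1}q^{\frac{1}{2}(n-1)n}}{(q;q)_{n-1}(q^{\nu+1};q)_{n-1}}z^{n},
$$
and compare it with the Taylor expansions of the two exceptional forms. Using the Gauss hypergeometric notation \eqref{Gauss} exactly as in the proof of Theorem~\ref{The4}, I would write
$$
k+\frac{lz}{(1-ze^{i\theta})^{1-2\alpha}}=k+l\sum_{n\geq0}\frac{(1-2\alpha)_{n}}{n!}e^{i\theta n}z^{n+1},\qquad
k+l\log(1-ze^{i\theta})=k-l\sum_{n\geq0}\frac{1}{n+1}e^{i\theta n}z^{n+1},
$$
for $k,l\in\mathbb{C}$, $\theta\in\mathbb{R}$. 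To identify either of these with $h_{\nu}^{(3)}(z;q)$ one would need the constant term to vanish and the linear coefficient to be $1$, forcing $k=0$ and $l=1$ (resp. $le^{i\theta}$-normalization); but then the coefficients of $z^{n}$ for $n\ge2$ in the exceptional forms are nonzero reals times roots of unity with moduli that do \emph{not} decay at the rate $q^{\frac12(n-1)n}/[(q;q)_{n-1}(q^{\nu+1};q)_{n-1}]$ — in particular the coefficient of $z^3$ in $h_{\nu}^{(3)}$ is negative while that of $z^{2}$ is positive, and the Gaussian-type decay $q^{\frac12 n(n-1)}$ cannot match either of the comparatively slowly varying coefficient sequences $\binom{1-2\alpha}{n}e^{i\theta n}$ or $\tfrac{1}{n+1}e^{i\theta n}$ for all $n$. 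So $h_{\nu}^{(3)}$ is not of the forms \eqref{forms}.

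The genuinely new content is negligible here: the whole argument is a transcription of the proof of Theorem~\ref{The4} with the second $q$-Bessel function replaced by the third and with Theorem~\ref{The1}\textbf{b.} replaced by Theorem~\ref{The2}\textbf{b.} The one point requiring a sentence of care is the exclusion of the exceptional forms \eqref{forms} from Lemma~\ref{Lemma3}; I expect this to be the main (mild) obstacle, and it is handled by the coefficient comparison just sketched. With that established, the proof concludes: by Theorem~\ref{The2}\textbf{b.} the function $z\mapsto h_{\nu}^{(3)}(z;q)$ lies in $\mathcal{C}(\alpha)$ under \eqref{The2-2.8} and \eqref{The2-2.10}, it is not of the forms \eqref{forms}, and so Lemma~\ref{Lemma3}\textbf{b.}–\textbf{c.} yield $h_{\nu}^{(3)}(z;q)\in\mathcal{H}^{\frac{1}{1-2\alpha}}$ for $\alpha\in[0,\tfrac12)$ and $h_{\nu}^{(3)}(z;q)\in\mathcal{H}^{\infty}$ for $\alpha\in(\tfrac12,1)$, as claimed.
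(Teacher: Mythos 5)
Your proposal is correct and follows essentially the same route as the paper: the paper's proof of Theorem~\ref{The5} likewise observes from the power series \eqref{third2} that $h_{\nu}^{(3)}(z;q)$ is not of the forms \eqref{form1} or \eqref{form2}, invokes part \textbf{b.} of Theorem~\ref{The2} for convexity of order $\alpha$, and concludes via Lemma~\ref{Lemma3}. Your coefficient-decay justification for excluding the exceptional forms is a welcome elaboration of a step the paper merely asserts.
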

\begin{proof}
From the power series representation of the function $z\mapsto{h_{\nu}^{(3)}(z;q)}$ which is given by \eqref{third2} it can be easily seen that this function is not of the forms \eqref{form1} for $\alpha\neq\frac{1}{2}$ and \eqref{form2} for $\alpha=\frac{1}{2}$, respectively. Also, we know from the second part of Theorem\eqref{The2} that the function $z\mapsto h_{\nu}^{(3)}(z;q)$ is convex of order $\alpha$ under our asumptions. As consequences, by applying Lemma\eqref{Lemma3} we have the desired results.
\end{proof}

\begin{theorem}\label{The6}
Let $q\in(0,1)$, $\nu>-1$ and $f(z)\in\mathcal{R}$ be of the form \eqref{f(z)}. The following statements are hold:
\begin{itemize}
	\item [\textbf{a.}] If $4(1-q)(1-q^{\nu})-3q^{\nu}>0$, then the hadamard product  $u(z)=h_{\nu}^{(2)}(z;q)*f(z)\in\mathcal{H}^{\infty}\cap\mathcal{R}.$
	\item [\textbf{b.}] If $(1-q)(1-q^{\nu})-3\sqrt{q}>0$, then the hadamard product  $v(z)=h_{\nu}^{(3)}(z;q)*f(z)\in\mathcal{H}^{\infty}\cap\mathcal{R}.$
\end{itemize}

\end{theorem}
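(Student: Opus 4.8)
The plan is to realize each Hadamard product as a convolution of two functions lying in classes of the form $\mathcal{P}_0(\cdot)$ and then invoke Lemma~\ref{Lemma4} to locate the product, combined with the implications in~\eqref{Hardy2} to land in $\mathcal{H}^{\infty}\cap\mathcal{R}$. Concretely, for part \textbf{a.} write $u(z)=h_{\nu}^{(2)}(z;q)*f(z)$ and observe that, since convolution with $z$ is the identity, $\frac{u(z)}{z}=\frac{h_{\nu}^{(2)}(z;q)}{z}*\frac{f(z)}{z}$ in the sense of coefficientwise multiplication; more usefully, because $f\in\mathcal{R}$ means (by~\eqref{R(alpha)}) $f'\in\mathcal{P}(0)$, i.e. $f'\in\mathcal{R}$ up to the $\eta$-rotation, I would pass to derivatives: $u'(z)=\bigl(h_{\nu}^{(2)}(z;q)\bigr)'*f'(z)$ after suitable normalization, or alternatively use the standard fact that if $g\in\mathcal{A}$ with $g'/1\in\mathcal{P}_0(\beta)$ and $f\in\mathcal{R}$ then $f*g\in\mathcal{R}$.

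First I would establish that the hypothesis $4(1-q)(1-q^{\nu})-3q^{\nu}>0$ is exactly part \textbf{iii.} of Corollary~\ref{Cor}, hence $\frac{h_{\nu}^{(2)}(z;q)}{z}\in\mathcal{P}_0\!\left(\frac12\right)$; likewise the hypothesis in part \textbf{b.} is part \textbf{vi.} of Corollary~\ref{Cor}, giving $\frac{h_{\nu}^{(3)}(z;q)}{z}\in\mathcal{P}_0\!\left(\frac12\right)$. Next, from $f\in\mathcal{R}$ one has $f'(z)\in\mathcal{P}(0)$; after absorbing the rotation $e^{i\eta}$ one works with $\real f'>0$, i.e. $f'\in\mathcal{P}_0(0)$, so that $\frac{f(z)}{z}=\int_0^1 f'(tz)\,dt$ also has positive real part, giving $\frac{f(z)}{z}\in\mathcal{P}_0(0)$ as well (this is the classical Herglotz-type averaging argument). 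Then Lemma~\ref{Lemma4} with $\alpha=\frac12$, $\beta=0$ yields
\begin{equation*}
\frac{h_{\nu}^{(2)}(z;q)}{z}*\frac{f(z)}{z}=\frac{u(z)}{z}\in\mathcal{P}_0\!\left(1-2\cdot\tfrac12\cdot1\right)=\mathcal{P}_0(0)=\mathcal{P},
\end{equation*}
so $\real\frac{u(z)}{z}>0$ on $\mathbb{U}$; the same computation with $h_{\nu}^{(3)}$ gives $\real\frac{v(z)}{z}>0$.

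To upgrade "$\real(u(z)/z)>0$" to "$u\in\mathcal{H}^{\infty}\cap\mathcal{R}$" I would use the sharper pairing: apply Lemma~\ref{Lemma4} at the level of derivatives. Since $\bigl(h_{\nu}^{(2)}(z;q)\bigr)'$ has the property that $\real\bigl(h_{\nu}^{(2)}(z;q)\bigr)'>\tfrac12$ — this follows from the same coefficient estimate used in Corollary~\ref{Cor} but weighting the $n$th coefficient by $n$, and the hypothesis $4(1-q)(1-q^{\nu})-3q^{\nu}>0$ must be checked to suffice here too, which is the one genuinely computational point — and $f'\in\mathcal{P}_0(0)$, Lemma~\ref{Lemma4} gives $u'(z)=\bigl(h_{\nu}^{(2)}(z;q)\bigr)'*f'(z)\in\mathcal{P}_0(0)$, hence $\real u'(z)>0$ on $\mathbb{U}$, which immediately says $u\in\mathcal{R}$ (with $\eta=0$) and, by~\eqref{Hardy2}, $u\in\mathcal{H}^{q/(1-q)}$ for every $q\in(0,1)$; letting $q\to1^-$ is not allowed, but one still concludes $u\in\mathcal{H}^p$ for all $p<\infty$. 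The final step to $\mathcal{H}^{\infty}$: here I would instead note that $\real u'(z)>\tfrac12$ in fact holds (again pair $\mathcal{P}_0(\tfrac12)$ with $\mathcal{P}_0(0)$? — no, that gives only $\mathcal{P}_0(0)$), so the cleaner route is to observe that $\real u'>0$ together with the coefficient decay of $h_{\nu}^{(2)}$ forces $u$ itself to be bounded: since $|a_n(u)|\le |a_n(h_{\nu}^{(2)})|$ and the latter series $\sum |a_n(h_{\nu}^{(2)})|$ converges (geometric-type bound from~\eqref{ine.1}, \eqref{ine.3}, \eqref{ine.4}), $u$ extends continuously to $\overline{\mathbb{U}}$ and is therefore in $\mathcal{H}^{\infty}$; and $u\in\mathcal{R}$ from $\real u'>0$.

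The main obstacle I anticipate is the step asserting $\real\bigl(h_{\nu}^{(k)}(z;q)\bigr)'>0$ (or the absolute-convergence-of-coefficients bound) under precisely the stated hypotheses $4(1-q)(1-q^{\nu})-3q^{\nu}>0$ and $(1-q)(1-q^{\nu})-3\sqrt{q}>0$: one must check that these inequalities, which in Corollary~\ref{Cor} guaranteed only the $\mathcal{P}_0(\tfrac12)$ membership of $h_{\nu}^{(k)}(z;q)/z$, are strong enough to control the extra factor of $n$ appearing in the derivative's coefficients, i.e. that $\sum_{n\ge2} n\,r^{n-1}<\infty$ with $r$ the relevant ratio $q^{\nu}/(4(1-q)(1-q^{\nu}))$ resp. $\sqrt{q}/((1-q)(1-q^{\nu}))$ being $<1$ — which it is, since the hypotheses force $r<\tfrac13<1$ — so in fact this obstacle is mild, and the series sums~\eqref{sum1}--\eqref{sum2} dispatch it. Everything else is a bookkeeping assembly of Corollary~\ref{Cor}, Lemma~\ref{Lemma4}, and~\eqref{Hardy2}.
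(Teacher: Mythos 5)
Your overall architecture (Corollary~\ref{Cor} parts \textbf{iii.}/\textbf{vi.}, Lemma~\ref{Lemma4}, the implications \eqref{Hardy2}, and absolute convergence of the coefficients for the $\mathcal{H}^{\infty}$ part) is the right one and matches the paper, but the central step --- establishing $\real u'(z)>0$, which is what both $u\in\mathcal{R}$ and the application of \eqref{Hardy2} require --- rests on two errors. First, the identity $u'(z)=\bigl(h_{\nu}^{(2)}(z;q)\bigr)'*f'(z)$ is false: if $h(z)=z+\sum a_nz^n$ and $f(z)=z+\sum b_nz^n$, then $(h*f)'(z)=1+\sum na_nb_nz^{n-1}$, whereas $h'*f'$ has coefficients $n^{2}a_nb_n$. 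The correct identity, and the one the paper uses, is $u'(z)=\frac{h_{\nu}^{(2)}(z;q)}{z}*f'(z)$, which pairs the factor you already control, namely $\frac{h_{\nu}^{(2)}(z;q)}{z}\in\mathcal{P}_{0}\left(\tfrac{1}{2}\right)$ from Corollary~\ref{Cor}~\textbf{iii.}, with $f'\in\mathcal{P}$, so that Lemma~\ref{Lemma4} gives $\gamma=1-2\cdot\tfrac{1}{2}\cdot1=0$ and hence $u'\in\mathcal{P}$ with no further computation. Second, the substitute claim $\real\bigl(h_{\nu}^{(2)}(z;q)\bigr)'>\tfrac{1}{2}$ is not implied by the hypothesis: the triangle-inequality route requires $\sum_{n\geq2}n\left|a_n\right|\leq\tfrac{1}{2}$, and with $\rho_{1}=q^{\nu}/(4(1-q)(1-q^{\nu}))$ the hypothesis only gives $\rho_{1}<\tfrac{1}{3}$, for which $\sum_{n\geq2}n\rho_{1}^{n-1}=\rho_{1}(2-\rho_{1})/(1-\rho_{1})^{2}$ can be as large as $\tfrac{5}{4}$. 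Your closing paragraph reduces this ``one genuinely computational point'' to the mere convergence of $\sum n r^{n-1}$, but convergence is not the issue: the sum must be at most $\tfrac{1}{2}$, and it need not be under the stated conditions.

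The remaining pieces are fine once $\real u'>0$ is obtained correctly. Your $\mathcal{H}^{\infty}$ argument via MacGregor's bound $n\left|a_n(f)\right|\leq2$ and the geometric estimate $\left|a_n(h_{\nu}^{(2)})\right|\leq\rho_{1}^{n-1}$, giving absolute convergence of the Taylor series on $\left|z\right|=1$ and hence continuity on $\overline{\mathbb{U}}$ and boundedness, is exactly the paper's argument (the paper additionally invokes Duren's theorem that $u'\in\mathcal{H}^{q}$ forces continuity on the closed disk). The preliminary detour showing $\real\left(\frac{u(z)}{z}\right)>0$ is correct but does not help, since membership in $\mathcal{R}$ is a condition on $u'$, not on $u/z$, and your worry about the rotation $e^{i\eta}$ is moot because the paper's $\mathcal{R}$ is by definition $\mathcal{R}_{0}(0)$, i.e.\ $\eta=0$ already.
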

\begin{proof}
	\begin{itemize}
		\item [\textbf{a.}] Suppose that the function $f(z)$ is in $\mathcal{R}.$ Then, from the definition of the class $\mathcal{R}$ we can say that the function $f^{\prime}(z)$ is in $\mathcal{P}.$ On the other hand, from the equality $u(z)=h_{\nu}^{(2)}(z;q)*f(z)$ we can easily see that $u^{\prime}(z)=\frac{h_{\nu}^{(2)}(z;q)}{z}*f^{\prime}(z).$ It is known from part \textbf{iii.} of the Corollary\eqref{Cor} that the function $\frac{h_{\nu}^{(2)}(z;q)}{z}\in\mathcal{P}_{0}\left(\frac{1}{2}\right).$ So, it follows from Lemma\eqref{Lemma4} that $u^{\prime}(z)\in\mathcal{P}.$ This means that $u(z)\in\mathcal{R}$ and $\real\left(u^{\prime}(z)\right)>0.$ If we consider the result which is given by \eqref{Hardy2}, then we have
		$u^{\prime}(z)\in\mathcal{H}^{p}$ for $p<1$ and $u(z)\in\mathcal{H}^{\frac{q}{1-q}}$ for $ 0<q<1 $, or equivalently, $u(z)\in\mathcal{H}^{p}$ for all $0<p<\infty.$ 
		
		Now, from the known upper bound for the \textit{Caratreodory functions} (see \cite[Theorem 1, p. 533]{MacGregor}), we have that, if the function $f(z)\in\mathcal{R}$, then $n\left|a_{n}\right|\leq2$ for $n\geq2.$ Using this fact together with the inequalities \eqref{ine.1}, \eqref{ine.3}, \eqref{ine.4} and the sum \eqref{sum4} we get
		\begin{align*}
		\left|u(z)\right|&=\left|h_{\nu}^{(2)}(z;q)*f(z)\right|=\left|z+\sum_{n\geq2}\frac{(-1)^{n-1}q^{(n-1)(n-1+\nu)}}{4^{n-1}(q;q)_{n-1}(q^{\nu+1};q)_{n-1}}a_{n}z^{n}\right|\\&\leq1+\frac{2}{\rho_{1}}\sum_{n\geq2}\frac{\rho_{1}^{n}}{n}=1+\frac{2}{\rho_{1}}\left(\log\frac{1}{1-\rho_{1}}-\rho_{1}\right),
		\end{align*}
		where $\rho_{1}=\frac{q^{\nu}}{4(1-q)(1-q^{\nu})}.$ This means that the function $z\mapsto{u(z)}$ is convergent absolutely for $\left|z\right|=1$ under the hypothesis. On the other hand, we know from \cite[Teorem 3.11, p.42]{Duren} that $u^{\prime}(z)\in\mathcal{H}^{q}$ implies the function $u(z)$ is continuous in $\overline{\mathbb{U}},$ where $\overline{\mathbb{U}}$ is closure of $\mathbb{U}.$  Since $\overline{\mathbb{U}}$ is a compact set, $u(z)$ is bounded in $\mathbb{U}$, that is, $u(z)\in\mathcal{H}^{\infty}$. 	
			 
		\item [\textbf{b.}] If $f(z)\in\mathcal{R},$ then $f^{\prime}(z)\in\mathcal{P}.$ Also, $v(z)=h_{\nu}^{(3)}(z;q)*f(z)$ implies $v^{\prime}(z)=\frac{h_{\nu}^{(3)}(z;q)}{z}*f^{\prime}(z).$ We known from part \textbf{vi.} of the Corollary\eqref{Cor} that the function $\frac{h_{\nu}^{(3)}(z;q)}{z}\in\mathcal{P}_{0}\left(\frac{1}{2}\right).$ So, by applying Lemma\eqref{Lemma4} we get $v^{\prime}(z)\in\mathcal{P}.$ That is, $v(z)\in\mathcal{R}$ and $\real\left(v^{\prime}(z)\right)>0.$ Now, from \eqref{Hardy2}, we have that
		$v^{\prime}(z)\in\mathcal{H}^{p}$ for $p<1$ and $v(z)\in\mathcal{H}^{\frac{q}{1-q}}$ for $ 0<q<1 $, or equivalently, $v(z)\in\mathcal{H}^{p}$ for all $0<p<\infty.$ 
		Using the well-known upper bound for the \textit{Caratreodory functions} together with the inequalities \eqref{ine.2}, \eqref{ine.3}, \eqref{ine.4} and the sum \eqref{sum4} we have
		\begin{align*}
		\left|v(z)\right|&=\left|h_{\nu}^{(3)}(z;q)*f(z)\right|=\left|z+\sum_{n\geq2}\frac{(-1)^{n-1}q^{\frac{1}{2}n(n-1)}}{(q;q)_{n-1}(q^{\nu+1};q)_{n-1}}a_{n}z^{n}\right|\\&\leq1+\frac{2}{\rho_{2}}\sum_{n\geq2}\frac{\rho_{2}^{n}}{n}=1+\frac{2}{\rho_{2}}\left(\log\frac{1}{1-\rho_{2}}-\rho_{2}\right),
		\end{align*}
		where $\rho_{2}=\frac{\sqrt{q}}{(1-q)(1-q^{\nu})}.$ So, we can say that the function $z\mapsto{v(z)}$ is convergent absolutely for $\left|z\right|=1$ under the stated conditions. Also, it is known from \cite[Teorem 3.11, p.42]{Duren} that $v^{\prime}(z)\in\mathcal{H}^{q}$ implies the function $v(z)$ is continuous in $\overline{\mathbb{U}}$. Since $\overline{\mathbb{U}}$ is a compact set, we may write that $v(z)$ is bounded in $\mathbb{U}$. Hence $v(z)\in\mathcal{H}^{\infty}$ and the proof is completed. 		
	\end{itemize}
\end{proof}
\begin{theorem}\label{The7}
	Let $\alpha\in\left[0,1\right)$, $\beta<1$, $\gamma=1-2(1-\alpha)(1-\beta)$, $q\in(0,1)$ and $\nu>-1$. Suppose that the function $f(z)$ of the form \eqref{f(z)} is in the class $\mathcal{R}_{0}(\beta).$ The following statements hold true:
	\begin{itemize}
		\item [\textbf{a.}] If the inequalities \eqref{The1-2.3} and \eqref{The3-2.13} are hold,
		then $u(z)=h_{\nu}^{(2)}(z;q)*f(z)\in\mathcal{R}_{0}(\gamma).$
		
		\item [\textbf{b.}] If the inequalities \eqref{The2-2.8} and \eqref{The3-2.14} are hold,
		then $v(z)=h_{\nu}^{(3)}(z;q)*f(z)\in\mathcal{R}_{0}(\gamma).$
	\end{itemize} 
\end{theorem}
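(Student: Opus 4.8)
\textbf{Proof proposal for Theorem \ref{The7}.}

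The plan is to mirror the structure of the proof of Theorem \ref{The6}, but now tracking the order parameters carefully through the convolution. First I would recall that, by definition of $\mathcal{R}_0(\beta)$, the hypothesis $f\in\mathcal{R}_0(\beta)$ means that $f'\in\mathcal{P}_0(\beta)$, where $f'(z)=1+\sum_{n\geq2}na_nz^n$ has constant term $1$. Next, differentiating the Hadamard product termwise gives $u'(z)=\frac{h_\nu^{(2)}(z;q)}{z}*f'(z)$ for part \textbf{a.} and $v'(z)=\frac{h_\nu^{(3)}(z;q)}{z}*f'(z)$ for part \textbf{b.}; this is the same identity used in Theorem \ref{The6} and follows by comparing coefficients, since convolving $z+\sum a_nz^n$ with the normalized $q$-Bessel function and then differentiating is the same as convolving the derivative with $\frac{h_\nu^{(2)}(z;q)}{z}=1+\sum_{n\geq2}(\cdots)z^{n-1}$.

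The key step is then to establish membership of the $q$-Bessel factor in the right subclass of $\mathcal{P}_0$. For part \textbf{a.}, the hypotheses \eqref{The1-2.3} and \eqref{The3-2.13} are exactly the conditions of part \textbf{a.} of Theorem \ref{The3}, so $\frac{h_\nu^{(2)}(z;q)}{z}\in\mathcal{P}_0(\alpha)$; similarly for part \textbf{b.}, the hypotheses \eqref{The2-2.8} and \eqref{The3-2.14} are the conditions of part \textbf{b.} of Theorem \ref{The3}, giving $\frac{h_\nu^{(3)}(z;q)}{z}\in\mathcal{P}_0(\alpha)$. Now I would invoke Lemma \ref{Lemma4} with the two factors $\frac{h_\nu^{(k)}(z;q)}{z}\in\mathcal{P}_0(\alpha)$ and $f'\in\mathcal{P}_0(\beta)$: the lemma yields $u'(z)=\frac{h_\nu^{(2)}(z;q)}{z}*f'(z)\in\mathcal{P}_0(\gamma)$ with $\gamma=1-2(1-\alpha)(1-\beta)$, and likewise $v'(z)\in\mathcal{P}_0(\gamma)$. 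Since $u'$ (respectively $v'$) has constant term $1$ and real part of $u'(z)-\gamma$ positive on $\mathbb{U}$, and since $u$ (respectively $v$) is normalized of the form \eqref{f(z)} because it is a Hadamard product of two such functions, we conclude $u\in\mathcal{R}_0(\gamma)$ and $v\in\mathcal{R}_0(\gamma)$, which is the claim.

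The only mild subtlety — and the main thing to be careful about rather than a genuine obstacle — is checking that the constant-term normalization of $f'$ matches the normalization required by Lemma \ref{Lemma4} (both $\frac{h_\nu^{(k)}(z;q)}{z}$ and $f'$ evaluate to $1$ at the origin, as required by the definition \eqref{P(alpha)} of $\mathcal{P}_0$), and that $\gamma<1$ so that $\mathcal{R}_0(\gamma)$ is a meaningful class; the latter is immediate since $(1-\alpha)(1-\beta)>0$ forces $\gamma=1-2(1-\alpha)(1-\beta)<1$. Everything else is a direct citation of Theorem \ref{The3} and Lemma \ref{Lemma4}, so no new estimates are needed.
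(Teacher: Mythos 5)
Your proposal is correct and follows essentially the same route as the paper: reduce to $f'\in\mathcal{P}_0(\beta)$, use Theorem \ref{The3} to place $\frac{h_\nu^{(k)}(z;q)}{z}$ in $\mathcal{P}_0(\alpha)$, apply Lemma \ref{Lemma4} to the convolution identity $u'=\frac{h_\nu^{(k)}(z;q)}{z}*f'$, and conclude. The extra remarks on normalization and $\gamma<1$ are fine (modulo the small typo $f'(z)=1+\sum_{n\geq2}na_nz^{n-1}$, not $z^n$), and nothing beyond the paper's argument is needed.
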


\begin{proof}
	\begin{itemize}
		\item [\textbf{a.}] If $f(z)\in\mathcal{R}_{0}(\beta)$, then this implies that $f^{\prime}(z)\in\mathcal{P}_{0}(\beta).$ We know from the first part of Theorem\eqref{The3} that the function $\frac{h_{\nu}^{(2)}(z;q)}{z}$ is in the class $\mathcal{P}_{0}(\alpha).$ Since $u^{\prime}(z)=\frac{h_{\nu}^{(2)}(z;q)}{z}*f^{\prime}(z),$ taking into acount the Lemma\eqref{Lemma4} we may write that $u^{\prime}(z)\in\mathcal{P}_{0}(\gamma).$ This implies that $u(z)\in\mathcal{R}_{0}(\gamma).$ 
		\item [\textbf{b.}] Similarly, $f(z)\in\mathcal{R}_{0}(\beta)$ implies that $f^{\prime}(z)\in\mathcal{P}_{0}(\beta).$ It is known from the second part of Theorem\eqref{The3} that the function $\frac{h_{\nu}^{(3)}(z;q)}{z}$ is in the class $\mathcal{P}_{0}(\alpha).$ Using the fact that $v^{\prime}(z)=\frac{h_{\nu}^{(3)}(z;q)}{z}*f^{\prime}(z)$ and Lemma\eqref{Lemma4}, we have  $v^{\prime}(z)\in\mathcal{P}_{0}(\gamma).$ As a result, $v(z)\in\mathcal{R}_{0}(\gamma).$ 
	\end{itemize}
\end{proof}

\end{document}